%#BIBTEX pbibtex part1
%\input head
\documentclass[12pt]{amsart}

\usepackage{amsmath}
\usepackage{amsbsy}
\usepackage{amssymb}
\usepackage{amscd}
\usepackage{eq}
\usepackage[dvips]{graphicx, color}

\setlength{\textwidth}{6.0in}
\setlength{\textheight}{9.0in}
\setlength{\oddsidemargin}{0in}
\setlength{\evensidemargin}{0in}
\setlength{\footskip}{0.5in}

%\input b1

%%%%%%%%%%%%%%%%%%%%%%%%%%%%%%%%%%%%%%%%%%%%%%%%%%%%%%%
%%  Abbreviations for blackboad boldface letters     %%
%%%%%%%%%%%%%%%%%%%%%%%%%%%%%%%%%%%%%%%%%%%%%%%%%%%%%%%

\newcommand{\bbc}{{\mathbb C}}

\newcommand{\bbn}{{\mathbb N}}

\newcommand{\bbp}{{\mathbb P}}
\newcommand{\bbq}{{\mathbb Q}}
\newcommand{\bbr}{{\mathbb R}}

\newcommand{\bbz}{{\mathbb Z}}

%%%%%%%%%%%%%%%%%%%%%%%%%%%%%%%%%%%%%%%%%%%%%%%%%%%%%%%
%%  Abbreviations for greek letters                  %%
%%%%%%%%%%%%%%%%%%%%%%%%%%%%%%%%%%%%%%%%%%%%%%%%%%%%%%%

\newcommand{\be}{{\beta}}

\newcommand{\gam}{{\gamma}}

\newcommand{\Gam}{{\Gamma}}

%%%%%%%%%%%%%%%%%%%%%%%%%%%%%%%%%%%%%%%%%%%%%%%%%%%%%%%
%%  Abbreviations for german letters                 %%
%%%%%%%%%%%%%%%%%%%%%%%%%%%%%%%%%%%%%%%%%%%%%%%%%%%%%%%

\newcommand{\gB}{{\mathfrak B}}

\newcommand{\gI}{{\mathfrak I}}

\newcommand{\gJ}{{\mathfrak J}}

\newcommand{\gS}{{\mathfrak S}}
\newcommand{\gt}{{\mathfrak t}}

%%%%%%%%%%%%%%%%%%%%%%%%%%%%%%%%%%%%%%%%%%%%%%%%%%%%%%%
%%  Abbreviations for fancy script letters           %%
%%%%%%%%%%%%%%%%%%%%%%%%%%%%%%%%%%%%%%%%%%%%%%%%%%%%%%%

 % scaled \magstep1
 % scaled \magstep1

%%%%%%%%%%%%%%%%%%%%%%%%%%%%%%%%%%%%%%%%%%%%%%%%%%%%%%%
%%  Abbreviations for Euler script letters                  %%
%%%%%%%%%%%%%%%%%%%%%%%%%%%%%%%%%%%%%%%%%%%%%%%%%%%%%%%

%%%%%%%%%%%%%%%%%%%%%%%%%%%%%%%%%%%%%%%%%%%%%%%%%%%%%%%
%%  Abbreviations for caligraphic letters            %%
%%%%%%%%%%%%%%%%%%%%%%%%%%%%%%%%%%%%%%%%%%%%%%%%%%%%%%%

%%%%%%%%%%%%%%%%%%%%%%%%%%%%%%%%%%%%%%%%%%%%%%%%%%%%%%%
%%  text letters in math mode                        %%
%%%%%%%%%%%%%%%%%%%%%%%%%%%%%%%%%%%%%%%%%%%%%%%%%%%%%%%

\newcommand{\ch}{{\operatorname{ch}}\,}

\newcommand{\aff}{{\operatorname {Aff}}}

\newcommand{\m}{{\operatorname {M}}}

\newcommand{\im}{{\operatorname {Im}}}

\newcommand{\gal}{{\operatorname{Gal}}}
\newcommand{\gl}{{\operatorname{GL}}}

\newcommand{\spl}{{\operatorname{SL}}}

\newcommand{\sst}{{\operatorname{ss}}}

%\renewcommand{\for}{\;\text{for}\;}

%%%%%%%%%%%%%%%%%%%%%%%%%%%%%%%%%%%%%%%%%%%%%%%%%%%%%%%
%%  abbreviations of text                            %%
%%%%%%%%%%%%%%%%%%%%%%%%%%%%%%%%%%%%%%%%%%%%%%%%%%%%%%%

\newcommand{\rep}{representation}

\newcommand{\pv}{prehomogeneous vector space}

%%%%%%%%%%%%%%%%%%%%%%%%%%%%%%%%%%%%%%%%%%%%%%%%%%%%%%%
%%  math operations                                  %%
%%%%%%%%%%%%%%%%%%%%%%%%%%%%%%%%%%%%%%%%%%%%%%%%%%%%%%%

%%%%%%%%%%%%%%%%%%%%%%%%%%%%%%%%%%%%%%%%%%%%%%%%%%%%%%%
%%  related to adeles                                %%
%%%%%%%%%%%%%%%%%%%%%%%%%%%%%%%%%%%%%%%%%%%%%%%%%%%%%%%

\newcommand{\Z}{\bbz}
\newcommand{\Q}{\bbq}
\newcommand{\R}{\bbr}
\newcommand{\C}{\bbc}
\newcommand{\p}{\bbp}

%%%%%%%%%%%%%%%%%%%%%%%%%%%%%%%%%%%%%%%%%%%%%%%%%%%%%%%
%%  others                                           %%
%%%%%%%%%%%%%%%%%%%%%%%%%%%%%%%%%%%%%%%%%%%%%%%%%%%%%%%

\newcommand{\kableadd}%
{Department of Mathematics\\ Cornell University\\
Ithaca NY 14853}

\newcommand{\sub}{\subset}

\newcommand{\ccd}{,\ldots,}

%%%%%%%%%%%%%%%%%%%%%%%
%%%%%%%%%%%%%%%%%%%%%%%
%%%%%by Akazawa

\makeatletter
\def\varddots{\mathinner{
\mkern1mu%
 \raise\p@\hbox{.}\mkern2mu%
 \raise4\p@\hbox{.}\mkern2mu%
 \raise7\p@\vbox{\kern7\p@\hbox{.}}%
\mkern1mu}}
\makeatother

%%%%%%%%%%%%%%%%%%%%%%%
%%%%%%%%%%%%%%%%%%%%%%%

%\input c1

%%%%%%%%%%%%%%%%%%%%%%%%%%%%%%%%%%%%%%%%%%%%%%%%%%%%%%%
%%  theorem environment                              %%
%%%%%%%%%%%%%%%%%%%%%%%%%%%%%%%%%%%%%%%%%%%%%%%%%%%%%%%

\theoremstyle{plain}
\newtheorem{thm}{Theorem}[section]

\newtheorem{prop}[thm]{Proposition}

\newtheorem{algo}[thm]{Algorithm}

\theoremstyle{definition}

\newtheorem{cond}[thm]{Condition}

\theoremstyle{remark}
        % \renewcommand{\therem}{}
  % \renewcommand{\theack}{}

%%%%%%%%%%%%%%%%%%%%%%%%%%%%%%%%%%%%%%%%%%%%%%%%%%%%%%%
%%  making numberings of equations compatible with   %%
%%  those of theorems                                %%
%%%%%%%%%%%%%%%%%%%%%%%%%%%%%%%%%%%%%%%%%%%%%%%%%%%%%%%

\usepackage{mathrsfs,bbm}

\newcommand{\weyl}{\mathbb W}
\newcommand{\Conv}{\operatorname{Conv}}
\newcommand{\Span}{\operatorname{Span}}
\newcommand{\semi}{\mathrm {ss}}
\newcommand{\coorde}{{\mathbbm a}}
\newcommand{\diag}{\operatorname{diag}}
\newcommand{\bbmp}{\mathbbm p}

\begin{document}

\address[K. Tajima]
{National Institute of Technology, Sendai College, Natori Campus, 
48 Nodayama, Medeshima-Shiote, Natori-shi, Miyagi, 981-1239, Japan}
\email{kazuaki.tajima.a8@tohoku.ac.jp}

\address[A. Yukie]
{Department of Mathematics, Graduate School of Science, 
Kyoto University, Kyoto 606-8502, Japan}
\email{yukie.akihiko.7x@kyoto-u.ac.jp}
\thanks{The second author was partially supported by 
Grant-in-Aid (C) (17K05169)\\}

\keywords{prehomogeneous, vector spaces, stratification, GIT}
\subjclass[2010]{11S90, 11R45}

\title{On the GIT stratification of prehomogeneous vector spaces I}
\author{Kazuaki Tajima}
\author{Akihiko Yukie}
\maketitle

\begin{abstract}
We determine the set which parametrizes the GIT stratification 
for four \pv s in this paper.  
\end{abstract}

\section{Introduction}
\label{sec:introduction}

This is part one of a series of four papers. 
Let $k$ be a perfect field. 
In this series of papers, we determine the GIT (geometric invariant theory) 
stratification of the following \pv s over $k$.

\vskip 5pt

(1) $G=\gl_3\times \gl_3\times \gl_2$, 
$V=\aff^3\otimes \aff^3\otimes \aff^2$.

(2) $G=\gl_6\times \gl_2$, 
$V=\wedge^2 \aff^6\otimes \aff^2$.

(3) $G=\gl_5\times \gl_4$,  
$V=\wedge^2 \aff^5\otimes \aff^4$.

(4) $G=\gl_8$, $V=\wedge^3 \aff^8$.  

\vskip 5pt
If the base field is $\C$ then orbits of (1)--(4) 
have been determined in \cite[pp. 385--387]{kimu}, 
\cite[pp. 456,457]{kimura-orbits}, 
\cite{ozekic}, \cite{gurevich} (see \cite[p.19]{ozekib} also)
respectively. 

The notion of GIT stratification 
was established by Ness, Kempf and Kirwan in 
\cite{kene}, \cite{kempf}, \cite{ness}, \cite{kirwan}. 
This notion will be reviewed in Section \ref{sec:git-stratification}. 
If the base field $k$ is algebraically closed then 
the GIT stratification gives us the orbit decomposition. 
The advantage of the GIT stratification is that it answers   
the rationality question of orbits. 
For the rationality of the GIT stratification, see 
\cite{tajima-yukie} (if the group is split, 
the rationality follows easily from \cite{kempf}). 
For the \pv s (1)--(4), 
we determine all orbits rationally over $k$. Moreover, 
the inductive structure of strata is guaranteed. 
Some smaller \pv s has been considered in \cite{ishimoto} by naive method. 
%Note that the case (1) follows from the case (3).  However, 
%we included the case (1) since it is a rather interesting case. 
%We shall point out why the case (1) follows from the case (3) 
%at the end of Section \ref{sec:git-stratification}. 

We refer to parts of this series of papers as Part I--Part IV. 
The GIT stratification is parametrized by a certain finite set $\gB$  
(see Section \ref{sec:git-stratification}). 
This set $\gB$ is combinatorially defined and so it is possible to 
determine $\gB$ by computer computations.  The purpose of this part 
is to carry out the computer computations to determine $\gB$ for 
(1)--(4). 
 
%Note that 
The stratum corresponding to $\be\in\gB$ could be the empty set. 
So it is important to 
determine which strata $S_{\be}$ are non-empty. 
%In fact many $S_{\be}$'s are the empty set.  
We carry this out and determine rational orbits in $S_{\be}$ 
for (1), (2) in Part II \cite{tajima-yukie-GIT2}, 
(3) in Part III \cite{tajima-yukie-GIT3} 
($\ch(k)\not=2$ is assumed in \cite{tajima-yukie-GIT3} 
to determine rational orbits in $S_{\be}$)
and (4) in Part IV \cite{tajima-yukie-GIT4}.  

The cardinality of the set $\gB$ for (1)--(4) is given in the following 
theorem. 
\begin{thm}
\label{thm:main}  
The cardinality of the set $\gB$ for the \pv s (1)--(4) is 
$49,81,292,183$ respectively.  
\end{thm}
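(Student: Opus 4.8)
The assertion is the output of a finite combinatorial computation, so the plan is to make that computation explicit and to organize it so that the two larger cases (3) and (4) remain tractable. Recall from Section~\ref{sec:git-stratification} that, after fixing a maximal torus $T\subset G$ and a $\weyl$-invariant inner product $\lan\,,\ran$ on $X_*(T)\otimes\bbr$, the set $\gB$ is a finite set of rational one-parameter subgroups $\be$, one per stratum, characterized combinatorially through the weights of $V$. For the four cases the weights are completely explicit: $e_i+f_j+g_k$ ($18$ weights) for (1), $e_i+e_j+g_k$ with $i<j$ ($30$ weights) for (2), $e_i+e_j+f_k$ with $i<j$ ($40$ weights) for (3), and $e_i+e_j+e_k$ with $i<j<k$ ($56$ weights) for (4); the relevant Weyl groups are $S_3\times S_3\times S_2$, $S_6\times S_2$, $S_5\times S_4$, and $S_8$, and the torus ranks are $8,8,9,8$. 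I would identify $X_*(T)\otimes\bbr$ with a subspace of $\bbr^{\text{rank}}$ by means of the trace form, so that each candidate $\be$ becomes an explicit rational vector.

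The computation rests on the standard Kempf--Kirwan--Ness description: an element $\be\neq 0$ lies in $\gB$ (taken in the closed dominant chamber to fix one representative per $\weyl$-orbit) precisely when $\be$ is the point of $\Conv\{\gamma : \lan\gamma,\be\ran\ge\lan\be,\be\ran\}$ closest to the origin, that is, when $\be$ is a fixed point of the nearest-point operation applied to the weights it selects. First I would generate candidates: the nearest point of the convex hull of any weight subset is the orthogonal projection of the origin onto the affine span of an affinely independent subfamily, so it suffices to run over affinely independent subsets of the weight set of size at most the rank, compute each projection by exact linear algebra over $\bbq$, and discard the zero vector together with any projection that fails to lie in the convex hull of the selected weights. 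Next I would move each surviving candidate into the dominant chamber, apply the self-consistency test above---recompute the set $\{\gamma:\lan\gamma,\be\ran\ge\lan\be,\be\ran\}$ and verify that $\be$ is its nearest point---then deduplicate and count, obtaining $|\gB|$.

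The main obstacle is one of scale and of rigor in cases (3) and (4). Even after restricting to affinely independent subsets, the number of subsets of $40$ or $56$ weights of size up to $9$ runs into the billions, so the enumeration must be pruned: I would search only up to the $\weyl$-action (gaining a factor as large as $|S_8|=40320$ for (4)) and impose the dominant-chamber and supporting-hyperplane conditions as early as possible to cut branches. Throughout, the arithmetic must be exact---floating point would make both the equality test $\lan\gamma,\be\ran=\lan\be,\be\ran$ and the deduplication of rational vectors unreliable---so every projection and inner product is carried out over $\bbq$. The subtle points, which I expect to absorb most of the effort, are guaranteeing completeness (that the pruned search misses no stratum index) and performing the quotient by $\weyl$ so that each orbit is counted exactly once. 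As a partial consistency check on the final tallies $49,81,292,183$ I would confront the counts, and the induced closure structure of the strata, against the orbit data over $\bbc$ recorded in the references of Section~\ref{sec:introduction}, bearing in mind that some strata may be empty and others may split into several orbits.
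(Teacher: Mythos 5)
Your overall architecture coincides with the paper's (Sections 3--5): enumerate weight subsets of bounded size up to the action of $\weyl$, compute the projection of the origin onto the affine span of each affinely independent family by exact linear algebra over $\Q$, keep the projections lying in the hull, move the survivors into $\gt^*_+$, remove duplicates, and count. One remark: your ``self-consistency'' test is redundant rather than wrong. If $\be\neq 0$ is the nearest point of $\Conv\gI$ for any $\gI$, then $\gI\subset\{\gam\in\Gam:(\gam,\be)_*\geq(\be,\be)_*\}$, and every point $x$ in the hull of the latter set satisfies $(x,\be)_*\geq(\be,\be)_*$, hence $\|x\|_*\geq\|\be\|_*$ with equality only at $\be$; so every candidate passes the test automatically and it filters nothing.

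The genuine gap is your choice of torus and weights. You work with a maximal torus of $G$ itself (``torus ranks $8,8,9,8$'') and with the raw weights $e_i+f_j+g_k$, $e_i+e_j+g_k$, $e_i+e_j+f_k$, $e_i+e_j+e_k$. The paper's $\gB$ is defined with respect to a maximal torus $T$ of $G_1=\prod\spl$ (ranks $5,6,7,7$), i.e.\ with each weight replaced by its restriction to $T$, equivalently its orthogonal projection to the sum-zero subspace; this is what the explicit $\gam_i$ of Section 3 are. The difference is not cosmetic. Since the center acts by a nontrivial character, all raw weights share the same nonzero central component $c$ (for case (4), $c=\tfrac 38(1,\dots,1)$), so no hull of raw weights ever contains the origin, and the nearest point of a raw hull equals $c$ plus the nearest point of the corresponding projected hull. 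Hence your algorithm, literally executed, returns the paper's set translated by $c$ \emph{together with one spurious element}, namely $c$ itself, arising from any subset (for instance all of $\Gam$) whose projected hull contains $0$; note that $c$ also passes your self-consistency test because $(\gam,c)_*=(c,c)_*$ for every raw weight $\gam$. You would therefore output $50,82,293,184$, each count one too large. The fix is to project out the central directions before doing anything else, exactly as the paper does. Relatedly, the correct bound on subset size is $r=\dim\gt^*=5,6,7,7$, not $8,8,9,8$; the larger bound does not create errors (oversized families are affinely dependent, or project to the origin and are discarded), but your feasibility estimates and the pruning you invoke for cases (3) and (4) should be made with the correct ranks, e.g.\ for case (4) one needs subsets of size at most $7$ out of $56$ weights, roughly $2.3\times 10^{8}$ of them, which is precisely why the paper's Step 1 reduces modulo $\weyl$ before any linear algebra is done.
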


We list elements of $\gB$ for the 
\pv s (1)--(4) in Sections \ref{sec:output1}, 
\ref{sec:output2}, \ref{sec:output3}, \ref{sec:output4}
respectively.  The numbers of non-empty strata are 
$16,13,61$ for the cases (1), (2), (3) respectively 
(see \cite{tajima-yukie-GIT2}, \cite{tajima-yukie-GIT3} 
(no assumption on $\ch(k)$ for this part). 
Note that in \cite{kimu}, \cite{kimura-orbits}, 
$V^{\sst},\{0\}$ are counted and so the numbers of orbits 
are $18,15,63$ respectively. We expect to have $21$ non-empty strata 
for the case (4). 

The \pv s (1)--(4) are rather important \pv s with 
interesting arithmetic interpretations of rational 
orbits (see \cite{wryu}, \cite{yukiej}).  The determination 
of the GIT stratification may have applications to some 
fields in number theory such as the zeta function theory.

The organization of this part is as follows. 
We review the notion of GIT stratification in 
Section \ref{sec:git-stratification}.  
We explain the outline of the computer program in 
Section \ref{sec:outline-program}. 
We shall use multiple arrays in  the computer program. 
We have to be careful not to use too much memory space
and we have to go back and forth between multiple arrays 
and a single array. We discuss the 
combinatorial problem regarding the lexicographical 
order of combinations in Section \ref{sec:lexic-order-comb}. 

Roughly speaking, we consider the set of weights of 
the \rep s (1)--(4) and find the closest point to the 
origin from the convex hull of each finite subset
of the set of weights. Since the Weyl group acts on the set
of finite subsets of the set of weights, 
we first find a set of representatives. 
For this part, we do not have to worry about 
the possibility of overflow and we can use a 
computer language such as ``C''. 
After reducing the number of cases, we make a certain matrix 
and find the rref (reduced row echelon form) for each case.  
For this part, we have to use a computer language such as ``MAPLE''
with no restriction of digits.  
We explain some details of the computer programs in 
Section \ref{sec:programs}. 
We list outputs of our computer program in Sections 
\ref{sec:output1}--\ref{sec:output4}.

The authors would like to thank the referees for helpful comments and suggestions.

\section{GIT stratification}
\label{sec:git-stratification}

In this section we  briefly review the notion of 
GIT stratification.  Let $k$ be a perfect field and 
$\overline k$ its algebraic closure. 
If $X$ is a finite set, then $\# X$ will denote its cardinality.
The standard symbols $\Q$, $\R$, $\C$, $\Z$ and $\bbn$ 
will denote respectively
the fields of rational, real, complex numbers, 
the ring of rational integers and the set of non-negative integers. 
Let $\gS_n$ be the permutation group of $\{1\ccd n\}$. 

We denote the space of $n\times m$ matrices by $\m_{m,n}$, 
$\m_n=\m_{n,n}$ and
the group of $n\times n$ invertible matrices by $\gl_n$.
Obviously, $\m_n$ has an algebra structure. 
Let $\spl_n=\{g\in\gl_n \mid \det(g)=1\}$.
We denote the unit matrix of dimension $n$ 
by $I_n$.  We use the notation $\diag(g_1\ccd g_m)$ 
for the block diagonal matrix whose diagonal blocks are 
$g_1\ccd g_m$.

We are mainly interested in \pv s, but 
we first consider a more general situation.

Let $G$ be a connected reductive group, $V$ a finite dimensional 
representation of $G$ both defined over $k$. Since we only consider 
split reductive groups in this paper, we assume that 
$G$ is split. 
We assume that there is a connected split reductive subgroup $G_1$ of $G$, 
a split torus $T_0\sub Z(G)$ (the center of $G$), 
such that $T_0\cap G_1$ is finite and $G=T_0G_1$ 
as algebraic groups. We assume that there is a 
rational character $\chi$ of $T_0$ such that the action of 
$t\in T_0$ is given by the scalar multiplication by $\chi(t)$.

Let $(T_0\cap G_1)\sub T\sub G_1$ be a maximal split torus, 
$X_*(T),X^*(T)$ be the groups of one parameter subgroups 
(abbreviated as 1PS from now on) and the group of 
rational characters respectively. We put 
\begin{equation*}
\mathfrak {t}=X_*(T)\otimes \R, \;
\mathfrak {t}_{\Q}=X_*(T)\otimes \Q, \;
\mathfrak t^*=X^*(T)\otimes \R, \;
\mathfrak t^*_{\Q}=X^*(T)\otimes \Q.
\end{equation*}
Let $\weyl =N_{G}(T)/T$ be the Weyl group
of $G$. $\weyl$ acts on $\gt^*$ also. 

There is a natural pairing 
$\langle \;  ,\;  \rangle_T :X^*(T)\times 
X_*(T)\rightarrow \Z$  defined by 
$t^{\langle \chi ,\lambda \rangle_T}=\chi (\lambda (t))$ for 
$\chi \in X^*(T),\lambda \in X_*(T)$. 
This is a perfect paring (\cite[pp.113--115]{borelb}).

There exists an inner product 
$(\;  , \; )$
on $\mathfrak{t}$ which is invariant under the actions of $\weyl$
and the Galois group $\gal(\overline k/k)$. 
We may assume that this inner product is rational, i.e., 
$(\lambda,\nu)\in \Q$ for all 
$\lambda,\nu\in \mathfrak t_{\Q}$. 
Let $\|\; \|$ be the norm on $\mathfrak t$ defined 
by $(\;  , \; )$. We choose a Weyl chamber 
$\mathfrak{t}_{+}\subset \mathfrak{t}$ 
for the action of $\weyl$.

For $\lambda\in\mathfrak t$, 
let $\beta =\beta(\lambda)$ be the element of
$\mathfrak t^*$ such that 
$\langle \beta, \nu\rangle = (\lambda, \nu)$
for all $\nu\in \mathfrak t$. 
The map $\lambda\mapsto \beta(\lambda)$ is a bijection 
and we denote the inverse map by 
$\lambda=\lambda(\beta)$. 
There is a unique positive rational
number $a$ such that $a\lambda(\beta)\in X_*(T)$
and is indivisible. We use the notation 
$\lambda_{\beta}$ for $a\lambda(\beta)$.

Identifying $\mathfrak t$ with $\mathfrak t^*$ 
we have a $\weyl$-invariant inner product 
$(\;  , \;  )_{*}$ on $\mathfrak{t^*}$, 
the norm $\|\; \|_{*}$ determined by $(\;  , \;  )_{*}$ 
and a Weyl chamber $\mathfrak t^*_{+}$. 

Let $N=\dim V$. 
We choose a coordinate system 
$v=(v_1,\dots ,v_N)$ on $V$ by which $T$ acts diagonally. 
Let $\gamma_i \in \mathfrak t^{\ast }$ and $\coorde_i$ be 
the weight and the coordinate vector 
which corresponds to $i$-th coordinate. 
Let $\Gam=\{\gam_1\ccd \gam_N\}$. 
For a subset $\gI\subset \Gam$, 
we denote the convex hull of $\mathfrak I$ by $\Conv \mathfrak I$. 
Let $\p(V)$ be the projective space associated with $V$ 
and $\pi_V:V\setminus\{0\}\to \p(V)$ the natural map. 
For $\gI\subset \Gam$ 
such that $0\notin \Conv \mathfrak I$, let $\beta $ be the 
closest point of $\Conv\mathfrak  I$ to the origin. 
Then $\beta $ lies in $\mathfrak t^*_{\mathbb Q}$. 
Let $\mathfrak{B} $ be the set of all such $\beta $ 
which lies in $\mathfrak t_{+}^{\ast }$. 

We define
\begin{align*} 
& Y_{\beta }= \Span \{\coorde_i\,|\, (\gamma_i,\beta )_{*}
\geq (\beta ,\beta )_{*}\},\quad 
Z_{\beta }= \Span \{\coorde_i\,|\, (\gamma_i,\beta )_{*}
=(\beta ,\beta )_{*}\}, \\
& W_{\beta }=\Span \{ \coorde_i\,|\, (\gamma_i,\beta )_{*}
>(\beta ,\beta )_{*}\}
\end{align*}
where $\Span$ is the spanned subspace. 
Clearly $Y_{\beta}=Z_{\beta}\oplus W_{\beta}$.

If $\lambda $ is a 1PS of $G$, we define 
\begin{align*}
P(\lambda ) & = \left  \{p\in G\; \Big |\; 
\lim_{t\rightarrow 0}\lambda (t)p\lambda(t)^{-1} 
\; \textrm{exists} \right \},\; M(\lambda) = Z_G(\lambda) \;\text{(the centralizer)}, \\
U(\lambda ) & = \left  \{p\in G\; \Big |\; 
\lim_{t\rightarrow 0}\lambda (t)p\lambda(t)^{-1}= 1 \right \}. 
\end{align*} 
The group $P(\lambda )$ is a parabolic subgroup of 
$G$ (\cite[p.148]{Springer-LAG}) with Levi part $M(\lambda)$ 
and unipotent radical $U(\lambda)$. 
We put $P_{\beta}=P(\lambda_{\beta})$, 
$M_{\beta }=Z_G(\lambda_{\beta })$  and 
$U_{\beta }=U(\lambda_{\beta })$.

Let $\chi_{\beta}$ be the indivisible rational character of $M_{\beta}$
such that the restriction of $\chi_{\beta}^a$ 
to $T$ coincides with $b \beta$ for some positive integers $a,b$. 
We define $G_{\beta }=\{g\in M_{\beta }\,|\,\chi_{\beta}(g)=1\}^{\circ }$ 
(the identity component). Then $G_{\beta }$ acts on $Z_{\beta }$. 
Note that $M_{\beta }$ and $G_{\beta }$ are defined over $k$, and 
since $\langle \chi_{\beta},\lambda_{\beta}\rangle$ is a positive
multiple of $\|\beta\|_{*}$,
$M_{\beta}=G_{\beta}\im(\lambda_{\beta})$.
Moreover, if $\nu$ is any rational 1PS in $G_{\beta}$, 
$(\nu,\lambda_{\beta})=0$.

Let $\mathbb P(Z_{\beta })^{\semi}$ be the set of 
semi-stable points of $\mathbb P(Z_{\beta })$ with respect to 
the action of $G^1_{\be} \stackrel {\mathrm {def}}= G_{\beta}\cap G_1$. 
Since there is a difference between 
$Z_{\beta}$ and $\mathbb P(Z_{\beta })$, 
we remove appropriate scalar directions from 
$G_{\beta}$ to consider stability.  
For the notion of semi-stable points, see \cite{mufoki}. 
We regard $\mathbb P(Z_{\beta })^{\semi}$ as a subset of $\p(V)$. 
Put 
\begin{align*}
& Z_{\beta }^{\semi}=\pi_V^{-1}(\mathbb P(Z_{\beta })^{\semi }), \;
Y_{\beta }^{\semi}=\{(z,w)\,|\, z\in Z_{\beta}^{\semi},w\in W_{\beta}\}. 
%& \p(Y_{\beta})^{\semi}=\{\pi_V((z,w))\,|\, (z,w)\in Y_{\beta }^{\semi }\}.  
\end{align*}
We define $S_{\beta }=GY_{\beta }^{\semi }$. 
Note that $S_{\beta }$ can be the empty set. 
We denote the set of $k$-rational points of 
$S_{\beta}$, etc., by $S_{\beta \,k}$, etc.

%$\{(0,x,y)\,|\, x\in Z_{\beta }^{\semi },y\in W_{\beta }\}$.

The following theorem is COROLLARY 1.4 \cite[p.264]{tajima-yukie}. 
\begin{thm} 
\label{KKN} 
Suppose that $k$ is a perfect field. Then  we have 
\begin{align*} 
V_k\setminus \{0\} = V^{\semi } _k
\amalg \coprod_{\beta \in \mathfrak{B}} S_{\beta \, k}. 
\end{align*}
Moreover, $S_{\beta \,k}\cong G_{k}\times_{P_{\beta \,k}} Y_{\beta \,k}^{\semi }$. 
\end{thm}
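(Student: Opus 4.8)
The plan is to deduce the statement from the Kempf--Ness--Kirwan theory of optimal destabilizing one-parameter subgroups, together with the rationality of Kempf's optimal 1PS over a perfect field. First I would attach to each nonzero $v\in V$ its state $\gI(v)=\{\gam_i\mid v_i\neq 0\}\sub\Gam$ and, following \cite{kempf}, measure the instability of $v$ by $\sup_{\lam}m(v,\lam)/\|\lam\|$, where $m(v,\lam)=\min\{\langle\gam_i,\lam\rangle\mid v_i\neq 0\}$ runs over nontrivial 1PS $\lam$ of $G$. The point $v$ is semi-stable precisely when this supremum is $\le 0$; otherwise Kempf's theorem produces an essentially unique indivisible 1PS realizing the supremum, whose normalization is exactly the closest point of $\Conv\gI(v)$ to the origin. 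Conjugating this point into the chosen Weyl chamber $\gt^*_+$ singles out a well-defined element $\be(v)\in\gB$.

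Next I would show that the fibers of $v\mapsto\be(v)$ are precisely the strata. Over $\overline k$ this is the classical GIT stratification of \cite{kene}, \cite{ness}, \cite{kirwan}: the nonzero unstable locus $V\setminus(V^{\semi}\cup\{0\})$ is the disjoint union over $\be\in\gB$ of the points of optimal type $\be$, and each such set equals $S_\be=GY_\be^{\semi}$ by the definitions of $Y_\be$, $Z_\be$, $W_\be$ and of semi-stability inside $Z_\be$ for $G_\be^1$. Disjointness is immediate once the optimal type is known to be a $G$-invariant, since distinct elements of $\gB$ lie in distinct $\weyl$-orbits, being the representatives in $\gt^*_+$.

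The passage to $k$-points rests on rationality. For $v\in V_k$ the state $\gI(v)$ is $\gal(\overline k/k)$-stable, so $\Conv\gI(v)$ and its closest point to the origin are Galois invariant; because the inner product was chosen $\gal(\overline k/k)$-invariant and Kempf's optimal 1PS is unique up to $P_\be$-conjugacy, it descends to $k$. Hence $\be(v)\in\gB$ is unchanged by Galois, each $\be\in\gB$ is itself $k$-rational, and $S_\be$, $P_\be$ are defined over $k$; intersecting the $\overline k$-stratification with $V_k$ then yields $V_k\setminus\{0\}=V^{\semi}_k\amalg\coprod_{\be\in\gB}S_{\be\,k}$.

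For the bundle isomorphism I would consider the multiplication map $G\times_{P_\be}Y_\be^{\semi}\to S_\be$, $(g,y)\mapsto gy$, and verify it is an isomorphism of $k$-varieties. The subgroup $P_\be=P(\lam_\be)$ is exactly the stabilizer of the weight filtration cut out by $\lam_\be$; it preserves $Y_\be^{\semi}$ by the inequalities $(\gam_i,\be)_*\ge(\be,\be)_*$ defining $Y_\be$, so the map is well defined, and its inverse sends $v$ to the $P_\be$-coset determined by the optimal 1PS of $v$ together with $y=g^{-1}v$. The \textbf{main obstacle} is $k$-rational surjectivity: a point of $S_{\be\,k}$ is a priori only of the form $gy$ with $g\in G_{\overline k}$ and $y\in Y_\be^{\semi}(\overline k)$. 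Here the optimal 1PS of $v$ is $k$-rational, so the coset $gP_\be$ is a $k$-point of $G/P_\be$, and lifting it to $g\in G_k$ amounts to the vanishing of a class in $H^1(\gal(\overline k/k),P_\be)$. For the split $G$ of the present paper the map $G_k\to(G/P_\be)(k)$ is surjective, so this vanishing is automatic and one obtains $g\in G_k$ with $y=g^{-1}v\in Y_{\be\,k}^{\semi}$; in general this descent is the substance of \cite{tajima-yukie}, whose Corollary 1.4 is the stated result.
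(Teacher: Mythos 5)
You should know at the outset that the paper contains no proof of this theorem to compare against: Theorem \ref{KKN} is imported verbatim, introduced only by the sentence ``The following theorem is COROLLARY 1.4 \cite[p.264]{tajima-yukie}.'' So the honest comparison is between your sketch and the proof in that cited reference, and your sketch does reconstruct the expected route: the Kempf--Ness--Kirwan theory over $\overline k$ (the optimal destabilizing 1PS is dual to the closest point of $\Conv\gI$ to the origin, and the locus of points of optimal type $\be$ is exactly $GY_{\be}^{\semi}$), followed by Galois descent over a perfect field, with the whole difficulty correctly localized in the $k$-rational surjectivity of $G\times_{P_{\be}}Y_{\be}^{\semi}\to S_{\be}$.

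Two points deserve flagging. First, a minor one: your supremum must run over nontrivial 1PS of $G_1$ (or of $G$ modulo the central torus $T_0$), since $T_0$ acts by scalars and any central 1PS would destabilize every vector; this is easily repaired and matches the paper's convention of measuring stability via $G_1$. Second, the substantive one: the step you assert rather than argue, ``the optimal 1PS of $v$ is $k$-rational,'' is precisely the crux. Kempf's rationality theorem over a perfect field gives that the optimal \emph{parabolic} $P(v)$ is defined over $k$; your stated justification (Galois-invariance of the inner product plus uniqueness of the optimal 1PS up to conjugacy) only yields a Galois-stable conjugacy class, not a rational representative. The upgrade requires a further descent: the set of optimal 1PS for $v$ is a torsor under the (split) unipotent radical of $P(v)$, so $H^1(k,U_{\be})=0$ produces a $k$-rational optimal 1PS, after which $G_k$-conjugacy of $k$-rational parabolics of the same type (Borel--Tits --- which is also why $G_k\to (G/P_{\be})(k)$ is surjective for \emph{any} connected reductive $G$, not only split ones) moves $v$ by an element of $G_k$ into $Y_{\be\,k}^{\semi}$. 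That chain is exactly the content of \cite[Corollary 1.4]{tajima-yukie}; since your final sentence defers the general descent to that corollary, your proposal ultimately rests on the same citation as the paper itself, but with a correct and useful account of what the citation is actually doing.
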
  

We call this stratification {\it the GIT stratification}. 
The importance of the above theorem is the rationality 
of the inductive structure of $S_{\beta}$. 
%The purpose of this part is to compute 
%$\gB$ for the \pv s in Introduction.  
%In parts II--IV, we determine the set of 
%$\be\in\gB$ such that $S_{\be}\not=\emptyset$. 
Obviously, we can use computer to determine $\gB$.

\section{Outline of the program}
\label{sec:outline-program}

In this section we explain the idea of the programming to 
compute the set $\gB$.

%We used C and MAPLE as computer languages to carry out the compujtations. 
%However, we explain the commputer program in the form 
%of a pseudo-program. Arrays in C (resp. MAPLE) starts from the 
%index $0$ (resp. $1$).  However, 
We assume that 
arrays start from the index $1$ in this paper. 
For actual programming, adjustments have to be made 
if arrays start from the index $0$ for a computer language.

In order to compute $\gB$, we have to consider 
the set of finite subsets of the set of weights of 
$V$ and find the closest point $\beta$ to the origin from the 
convex hull. 
%Even though the set of finite subsets of the set of weights of $V$ 
%is very large, there is a Weyl group action. This 
%makes it possible to reduce the number of cases 
%significantly.   We use C for this part. 
%One issue we have to worry about 
%when using C is that the size of integers do not exceed 
%certain limits. 
%Since the action of Weyl group basically 
%changes the order of numbers the size of integers 
%is not an issue in this step. 
%After reducing the number of cases, we compute 
%$\be$. This can be carried out by computing the canonical 
%form of a certain matrix.  We use MAPLE for this part, 
%since there is no restriction of digits in MAPLE. 

We first explain how to reduce the number of cases. 
Let $G,G_1,V$ be as in Section \ref{sec:git-stratification}. 
Let $r=\dim \gt^*$. 
We remind the reader that $\Gam=\{\gam_1\ccd \gam_N\}$ is the 
set of weights of coordinates of $V$. 
Let $A_R$ be the set consisting of all subsets of cardinality $R$
of $\Gam$.  If $C$ is a convex polytope 
then it is a finite union of simplices.  
Therefore, we only have to consider $\gI\in A_R$ 
which satisfy the following condition. 

\begin{cond}
\label{cond:be-condition}
\begin{itemize}
\item[(1)] 
$R\leq r$. 
\item[(2)] 
If $\gI=\{\gam_{j_1}\ccd \gam_{j_R}\}$ 
and $\be$ is the closest point of $\Conv \gI$ 
to the origin, then $\{\gam_{j_2}-\gam_{j_1}\ccd \gam_{j_R}-\gam_{j_1}\}$ 
is linearly independent and  
$\be$ is orthogonal to 
$\gam_{j_2}-\gam_{j_1}\ccd \gam_{j_R}-\gam_{j_1}$. 
\item[(3)] 
$\be$ is an interior point of $C$. 
\end{itemize}  
\end{cond}

We used the capital letter $R$ because this is the constant 
we shall use in algorithms. It will be easier this way to distinguish 
constants and variables in algorithms. 

Note that since $\Conv \gI$ does not contain the origin, 
we only have to consider the face of $\Conv \gI$ 
which contains $\be$. So we may assume that 
the dimension of $\Conv \gI$ is strictly less than $r$. 
Since an ($r-1$)-dimensional simplex is determined 
by $r$ vectors, the properties (1), (2) follow. 
The reason why we may assume (3) is that 
$\be$ can be obtained from $\gI'\in A_{R'}$ 
for $R'<R$ if $\be$ belongs to the boundary of $\Conv \gI$. 

Let $B_R$ be the set of all $\gI\in A_R$ which satisfies 
Condition \ref{cond:be-condition}. Obviously 
$\weyl$ acts on $B_R$.  Let $C_R\sub B_R$
be a set of representatives of $\weyl\backslash B_R$. 
Let $\gI\in C_R$ and $\be'$ be the closest point 
of $\Conv \gI$ to the origin. We choose an element 
$g\in \weyl$ so that $\be=g\be'\in \gt^*_+$. Let 
$S_R$ be the set of such $\be$.  

\begin{prop}
\label{prop:gB-computation}
$\gB = \bigcup_{R=1}^r S_R$. 
\end{prop}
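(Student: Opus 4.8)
The plan is to show the two inclusions $\gB \subseteq \bigcup_{R=1}^r S_R$ and $\bigcup_{R=1}^r S_R \subseteq \gB$ separately, using the fact that every relevant $\be$ arises as the closest point to the origin of the convex hull of \emph{some} finite subset of $\Gam$, and that by the simplicial reduction we may take this subset to be minimal.

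\textbf{The inclusion $\bigcup_{R=1}^r S_R \subseteq \gB$.}
First I would verify that each $S_R$ consists of genuine elements of $\gB$. By definition $\gB$ is the set of all closest points $\be$ to the origin of $\Conv \gI$, ranging over $\gI\subseteq \Gam$ with $0\notin\Conv\gI$, which additionally lie in $\gt^*_+$. Given $\be\in S_R$, it is by construction $\be=g\be'$ where $\be'$ is the closest point of $\Conv\gI$ for some $\gI\in C_R\subseteq A_R$ and $g\in\weyl$ was chosen so that $\be\in\gt^*_+$. Since the inner product $(\;,\;)_*$ is $\weyl$-invariant, $g$ maps $\Conv\gI$ to $\Conv(g\gI)$ isometrically, so $\be=g\be'$ is the closest point of $\Conv(g\gI)$ to the origin, and $g\gI\subseteq\Gam$ because $\weyl$ permutes the weights. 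As $\be\in\gt^*_+$ by the choice of $g$, we conclude $\be\in\gB$. This direction is essentially bookkeeping with the $\weyl$-action.

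\textbf{The inclusion $\gB \subseteq \bigcup_{R=1}^r S_R$.}
This is the substantive direction. Take $\be\in\gB$, so $\be\in\gt^*_+$ is the closest point to the origin of $\Conv\gI_0$ for some $\gI_0\subseteq\Gam$ with $0\notin\Conv\gI_0$. The goal is to produce a subset $\gI$ satisfying Condition \ref{cond:be-condition} whose closest point is $\weyl$-conjugate to $\be$, and then match it to the representative in $C_R$. The key geometric fact, indicated in the remark following the condition, is that $\be$ lies in the relative interior of some face $F$ of $\Conv\gI_0$; replacing $\gI_0$ by the subset of weights spanning that face, I may assume $\be$ is an interior point, giving (3). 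Since $F$ is a simplex-like face and $\be$ realizes the minimum distance, the affine span of $F$ is orthogonal to $\be$ at $\be$; thinning the spanning set to an affinely independent one yields a subset $\{\gam_{j_1}\ccd\gam_{j_R}\}$ with $\{\gam_{j_2}-\gam_{j_1}\ccd\gam_{j_R}-\gam_{j_1}\}$ linearly independent and orthogonal to $\be$, which is exactly (2), and linear independence of $R-1$ vectors in $\gt^*$ forces $R\le r$, giving (1). Thus this thinned $\gI$ lies in some $B_R$. Finally, $\gI$ is $\weyl$-equivalent to a representative $\gI''\in C_R$, whose closest point $\be''$ satisfies $\be=h\be''$ for the appropriate $h\in\weyl$; by $\weyl$-invariance of the inner product and the uniqueness of the $\gt^*_+$ representative, $\be\in S_R$.

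\textbf{Main obstacle.}
The crux, and the only step needing real care rather than definition-chasing, is the geometric claim that the closest point $\be$ to the origin from a convex polytope lies in the relative interior of a unique face, and that $\be$ is orthogonal to that face's affine hull. This is where the convexity analysis lives: one argues that if $\be$ were on the boundary of the minimal face containing it, one could move within the polytope to decrease the distance, contradicting minimality; and that the orthogonality follows from the first-order optimality condition for minimizing $\|x\|_*^2$ over the affine span. I would handle this via the standard projection argument, being careful that minimality of the face and affine independence of the thinned generating set are compatible, so that the resulting $\gI$ genuinely satisfies all three parts of Condition \ref{cond:be-condition} simultaneously rather than one at a time.
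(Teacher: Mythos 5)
Your overall structure matches the paper's: the paper likewise treats the inclusion $\bigcup_{R=1}^r S_R \subseteq \gB$ as immediate from $\weyl$-equivariance and only proves $\gB\subseteq\bigcup_{R=1}^r S_R$, and your final step (pass to the representative in $C_R$, use $\weyl$-invariance of $(\;,\;)_*$ and the fact that two $\weyl$-conjugate points of the closed chamber $\gt^*_+$ must coincide) is exactly the paper's argument. The only organizational difference is that the paper carries out the face/simplex reduction in the discussion preceding Condition \ref{cond:be-condition}, so its proof of the proposition can simply begin with ``$\be$ is obtained from some $\gI\in B_R$''; you fold that reduction into the proof itself, which is legitimate, and your treatment of the relative-interior and first-order orthogonality claims is the right way to make that reduction precise.

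There is, however, one inference that fails as written: ``linear independence of $R-1$ vectors in $\gt^*$ forces $R\le r$'' only yields $R-1\le r$, i.e.\ $R\le r+1$, which is not Condition \ref{cond:be-condition} (1). To get $R\le r$ you must combine linear independence with the other half of your own sentence: the differences $\gam_{j_2}-\gam_{j_1}\ccd \gam_{j_R}-\gam_{j_1}$ are orthogonal to $\be$, and $\be\neq 0$ because $0\notin\Conv\gI_0$. Hence these $R-1$ linearly independent vectors lie in $\be^{\perp}$, a subspace of dimension $r-1$, so $R-1\le r-1$. Equivalently, as the paper argues, the face of $\Conv\gI_0$ whose relative interior contains $\be$ lies in the affine hyperplane $\{x \mid (x,\be)_*=(\be,\be)_*\}$, which has dimension $r-1$ and does not pass through the origin, so the simplex you extract has at most $r$ vertices (and in fact the weights $\gam_{j_1}\ccd\gam_{j_R}$ themselves are then linearly independent). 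This is not a cosmetic point: the bound $R\le r$, rather than $R\le r+1$, is precisely what allows the algorithm to restrict attention to subsets of cardinality at most $r$, so the use of $\be\neq 0$ and the orthogonality must appear explicitly in the justification of (1).
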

\begin{proof}
It is enough to prove that 
$\gB \sub \bigcup_{R=1}^r S_R$. 
Suppose that $\be\in\gB$ is obtained from 
$\gI\in B_R$.  Then there exist 
$\gJ\in C_R$ and $g\in \weyl$ such that 
$\gJ=g \gI$.  Let $\be'$ be the closest point 
of $\Conv \gJ$ to the origin and $h\in\weyl$ 
is an element such that $h \be'\in \gt^*_+$. 

Since $\be$ is the closest point 
of $\Conv \gI$ to the origin, 
$g\beta=\be'$.  So $hg\beta\in \gt^*_+$, 
which implies that $h\be' = hg\beta=\beta$. 
Therefore, $\be\in S_R$. 
\end{proof}

By the above proposition, it is enough to 
determine $S_R$ for $R=1\ccd r$ and remove 
duplication. 

We explain the algorithm  more explicitly for the \pv s (1)--(4)
in the following. 
We choose products of $\spl$'s as $G_1$ in 
Section \ref{sec:git-stratification}. For example, 
$G_1=\spl_5\times \spl_4$ for the case (3).  
Let $T_0\sub G$ be the center of $G$.  For example, 
$T_0=\{(t_1I_6,t_2I_2)\mid t_1,t_2\in\gl_1\}$ for the case (2). 
Let $T\sub G_1$ be the subgroup consisting of elements whose components 
are diagonal matrices.
We choose $T$ in Section \ref{sec:git-stratification} for the cases (1)--(4) 
as follows.
\begin{align*}
(1) \;\; T = & 
\left\{(\diag(t_{11},t_{12},t_{13}),\diag(t_{21},t_{22},t_{23}),\diag(t_{31},t_{32}))
\,\vrule\,
\begin{matrix}
t_{11}t_{12}t_{13}=t_{21}t_{22}t_{23} \\
=t_{31}t_{32}=1
\end{matrix}
\right\}. \\
(2) \;\; T = & 
\{(\diag(t_{11}\ccd t_{16}),\diag(t_{21},t_{22}))
\mid t_{11}\cdots t_{16}=t_{21}t_{22}=1\}. \\
(3) \;\; T = & 
\{(\diag(t_{11}\ccd t_{15}),\diag(t_{21}\ccd t_{24}))
\mid t_{11}\cdots t_{15}=t_{21}\cdots t_{24}=1\}. \\
(4) \;\; T = & 
\{\diag(t_1\ccd t_8) \mid t_1\cdots t_8=1\}. 
\end{align*}

Then we can describe $\gt^*$ as follows.   
\begin{align*}
(1) \;\; \gt^* & = \left\{(a_{11},a_{12},a_{13},
a_{21},a_{22},a_{23},a_{31},a_{32})\in\R^8
\,\vrule\, \sum_{j=1}^3a_{1j}=\sum_{j=1}^3a_{2j}=\sum_{j=1}^2a_{3j}=0\right\}.  \\
(2) \;\; \gt^* & = \left\{(a_{11}\ccd a_{16},a_{21},a_{22})\in\R^8
\,\vrule\, \sum_{j=1}^6a_{1j}=\sum_{j=1}^2a_{2j}=0\right\}. \\
(3) \;\; \gt^* & = \left\{(a_{11}\ccd a_{15},a_{21}\ccd a_{24})\in\R^9
\,\vrule\, \sum_{j=1}^5a_{1j}=\sum_{j=1}^4a_{2j}=0\right\}. \\
(4) \;\; \gt^* & = \left\{(a_1\ccd a_8)\in\R^8
\,\vrule\, \sum_{j=1}^8a_j =0\right\}.
\end{align*}
For the case (3), $a=(a_{11}\ccd a_{15},a_{21}\ccd a_{24})\in \Z^8$ 
can be regarded as a character of $T$ so that for 
$t=(\diag(t_{11}\ccd t_{15}),\diag(t_{21}\ccd t_{24}))$, 
$t^a \stackrel{\rm{def}}= \prod_{i=1}^5 t_{1i}^{a_{1i}}\prod_{i=1}^4 t_{2i}^{a_{2i}}$. 
Other cases are similar. 
%Then $\gt^*_{\Q}$ consists of 
%$a=(a_{11}\ccd a_{15},a_{21}\ccd a_{24})\in \gt^*$ such that $a_{ij}\in\Q$ 
%for all $i,j$.  

The Weyl groups $\weyl$ for the cases (1)--(4) are
$\gS_3\times \gS_3\times \gS_2$, 
$\gS_6\times \gS_2$,
$\gS_5\times \gS_4$,
$\gS_8$ respectively. 
To define a $\weyl$-invariant inner product on $\gt$ is equivalent to 
define a $\weyl$-invariant inner product on $\gt^*$. 
For the case (3), we define 
\begin{equation*}
(a,b)_* = \sum_{i=1}^5 a_{1i}b_{1i} + \sum_{i=1}^4 a_{2i}b_{2i}
\end{equation*}
for $a=(a_{11}\ccd a_{15},a_{21}\ccd a_{24}), 
b=(b_{11}\ccd b_{15},b_{21}\ccd b_{24})$.
This inner product is $\weyl$-invariant. We define $(\;,\;)_*$ 
for other cases similarly.  We choose the Weyl chamber 
for the cases (1)--(4) as follows. 

\vskip 5pt
(1)
\begin{math}
\gt^*_+ = \left\{(a_{11},a_{12},a_{13},a_{21},a_{22},a_{23},a_{31},a_{32})
\in \gt^* \, \vrule \, 
\begin{matrix}
a_{i1}\leq a_{i2}\leq a_{i3} \;(i=1,2), \\
a_{31}\leq a_{32} 
\end{matrix}
\right\}.
\end{math}

(2)
\begin{math}
\gt^*_+ = \{(a_{11}\ccd a_{16},a_{21},a_{22})\in \gt^* \mid
a_{11}\leq \cdots \leq a_{16},a_{21}\leq a_{22} \}.
\end{math}

(3)
\begin{math}
\gt^*_+ = \{(a_{11}\ccd a_{15},a_{21}\ccd a_{24})\in \gt^*\mid 
a_{11}\leq \cdots \leq a_{15},a_{21}\leq \cdots\leq a_{24} \}.
\end{math}

(4)
\begin{math}
\gt^*_+ = \{(a_1\ccd a_8)\in \gt^*\mid 
a_1\leq \cdots \leq a_8\}.
\end{math}

\vskip 5pt

Let $\bbmp_{n,1}\ccd \bbmp_{n,n}$ be the coordinate vectors of $\aff^n$.
We put $p_{n,ij}=\bbmp_{n,i}\wedge \bbmp_{n,j}$, 
$p_{n,ijk}=\bbmp_{n,i}\wedge \bbmp_{n,j}\wedge \bbmp_{n,k}$, 
$q_{n,ij}=\bbmp_{n,i}\otimes \bbmp_{n,j}$. 
We choose a basis of $V$ for the cases (1)--(4) 
so that the coordinate vectors are as follows.
Let $N=\dim V$. 
Note that $N=18,30,40,56$ for the cases (1)--(4) respectively. 

\vskip 5pt
(1) 
\begin{math}
\mathbbm a_1= q_{3,11}\otimes \bbmp_{2,1},\;
\mathbbm a_2= q_{3,12}\otimes \bbmp_{2,1}\ccd 
\mathbbm a_9= q_{3,33}\otimes \bbmp_{2,1}\ccd 
\mathbbm a_{18}= q_{3,33}\otimes \bbmp_{2,2}.
\end{math}

(2) 
\begin{math}
\mathbbm a_1= p_{6,12}\otimes \bbmp_{2,1},\;
\mathbbm a_{15}= p_{6,56}\otimes \bbmp_{2,1},\;
\mathbbm a_{16}= p_{6,12}\otimes \bbmp_{2,2}\ccd 
\mathbbm a_{30}= p_{6,56}\otimes \bbmp_{2,2}. 
\end{math}

(3) 
\begin{math}
\mathbbm a_1= p_{5,12}\otimes \bbmp_{4,1},\;
\mathbbm a_{10}= p_{5,45}\otimes \bbmp_{4,1},\;
\mathbbm a_{11}= p_{5,12}\otimes \bbmp_{4,2}\ccd 
\mathbbm a_{40}= p_{5,45}\otimes \bbmp_{4,4}. 
\end{math}

(4) 
\begin{math}
\mathbbm a_1= p_{8,123},\;
\mathbbm a_2= p_{8,124} \ccd 
\mathbbm a_{56}= p_{8,678}
\end{math}

\vskip 5pt
Let $\gam_i$ be the weight of $\mathbbm a_i$. 
Then $\{\gam_1\ccd \gam_N\}$ for the cases (1)--(4) are as follows.

\vskip 10pt

(1)
\begin{math}
\gam_1 = (\tfrac 23,-\tfrac 13,-\tfrac 13,\tfrac 23,-\tfrac 13,-\tfrac 13,
\tfrac 12,-\tfrac 12)\ccd 
\gam_{18} = (-\tfrac 13,-\tfrac 13,\tfrac 23,-\tfrac 13,-\tfrac 13,\tfrac 23,
-\tfrac 12,\tfrac 12).
\end{math}

(2)
\begin{math}
\gam_1 = (\tfrac 23,\tfrac 23,-\tfrac 13,-\tfrac 13,-\tfrac 13,-\tfrac 13,
\tfrac 12,-\tfrac 12)\ccd 
\gam_{30} = (-\tfrac 13,-\tfrac 13,-\tfrac 13,-\tfrac 13,\tfrac 23,\tfrac 23,
-\tfrac 12,\tfrac 12). 
\end{math}

(3)
\begin{math}
\gam_1 = \left(\tfrac 35,\tfrac 35,-\tfrac 25,-\tfrac 25,-\tfrac 25,
\tfrac 34,-\tfrac 14,-\tfrac 14,-\tfrac 14\right)
\end{math}

\quad\quad 
\begin{math}
\ccd \gam_{40} = \left(-\tfrac 25,-\tfrac 25,-\tfrac 25,\tfrac 35,\tfrac 35,
-\tfrac 14,-\tfrac 14,-\tfrac 14,\tfrac 34\right).
\end{math}

(4)
\begin{math}
\gam_1 = (\tfrac 58,\tfrac 58,\tfrac 58,-\tfrac 38\ccd -\tfrac 38) \ccd 
\gam_{56} = (-\tfrac 38\ccd -\tfrac 38,\tfrac 58,\tfrac 58\ccd \tfrac 58).  
\end{math}

\vskip 5pt
We fix $1\leq R\leq r$ 
($r=5,6,7,7$ for the cases (1)--(4) respectively). 
Let $A_R$ be the set of all subsets $Y$ of $\Gam$ such that 
$\# Y = R$. We identify $A_R$ with the set of all sequences 
$v=(v_1\ccd v_R)$ such that $1\leq v_1<\cdots<v_R\leq N$. 

\vskip 5pt
\noindent
Step 1. 
We find a set $C_R'$ of representatives of $\weyl \backslash A_R$.
For this purpose, we assign the lexicographical order 
to any element $\gI\in A_R$, say $L(\gI)$. 
Note that 
\begin{math}
1\leq L(\gI)\leq 
\left(
\begin{smallmatrix}
N \\ R
\end{smallmatrix}
\right).
\end{math}
For 
\begin{math}
1\leq i\leq 
\left(
\begin{smallmatrix}
N \\ R
\end{smallmatrix}
\right), 
\end{math}
let $\gI(i)\in A_R$ be the subset such that 
$L(\gI(i))=i$.

Let $X_{R,i}$ be an array of integers 
such that 
\begin{math}
1\leq i\leq \left(
\begin{smallmatrix}
N \\ R
\end{smallmatrix}
\right).
\end{math}
At first we assign $X_{R,i}=0$ for all $i$. 
We change the value of $X_{R,1}$ to $1$ and then 
for all $w\in \weyl$, 
change the value of $X_{R,L(w(\gI(1)))}$ to $2$. 
Then we consider the first $i\geq 2$ such that $X_{R,i}=0$,
change the value of $X_{R,i}$ to $1$ and for all $w\in \weyl$, 
change the value of $X_{R,L(w(\gI(i)))}$ to $2$. 
We continue this process.  Then $C_R'=\{\gI(i)\mid X_{R,i}=1\}$ 
is a set of representatives for $\weyl \backslash A_R$.

\vskip 5pt
\noindent
Step 2. 
Suppose that $\gI=\{\gam_{j_1}\ccd \gam_{j_R}\}\in C_R'$. 
We would like to find the closest point $\be'$ of $\Conv \gI$ to the 
origin and see if it satisfies Condition \ref{cond:be-condition}. 
Such $\be'$ is in the form 
$\be'=c_1\gam_{j_1}+\cdots+c_i\gam_{j_R}$
where $c_1\ccd c_R\in\Q$ and 
$0<c_1\ccd c_R<1$, $c_1+\cdots+c_R=1$.

Let $M=(m_{kl})$ be the $R\times R$ matrix such that
$m_{kl}=(\gam_{j_k}-\gam_{j_1},\gam_{j_l})_*$ for $k=2\ccd R,l=1\ccd R$
and $m_{11}=\cdots=m_{1R}=1$. We put $c=[c_1\ccd c_R]$. 
Then $\be'$ is orthogonal to 
$\gam_{j_2}-\gam_{j_1}\ccd \gam_{j_R}-\gam_{j_1}$
if and only if entries of $Mc$ are $0$ except for the first entry. 
The condition $c_1+\cdots+c_R=1$ means that the first entry 
of $Mc$ is $1$. So $c$ has to satisfy the condition 
$Mc=[1,0\ccd 0]$. 

If $\{\gam_{j_2}-\gam_{j_1}\ccd \gam_{j_R}-\gam_{j_1}\}$
is linearly independent then $c$ is unique and so $M$ has to be
non-singular. We put $b=[1,0\ccd 0]$. 
We form the augmented matrix $M'=(M\; b)$ and 
find the reduced row echelon form of $M'$, say $(M_0\; d)$ 
($d=[d_1\ccd d_R]$).
Then $M$ is non-singular if and only if 
the $(R,R)$-entry of $M_0$ is $1$. 
Let $C_R$ be the set of $\gI\in C_R'$ 
such that the $(R,R)$-entry of $M_0$ is $1$ 
and $d_1\ccd d_R>0$. Then this $C_R$ can be regarded as 
$C_R$ described before Proposition \ref{prop:gB-computation}. 
If $\gI\in C_R$ then we form
$\be'=d_1\gam_{j_1}+\cdots+d_R\gam_{j_R}$. 
By sorting entries of $\be'$, we obtain an element $\be\in \gt^*_+$. 

\vskip 5pt
\noindent
Step 3. We combine all $\be$ obtained in Step 2 for $R=1\ccd r$. 
We remove duplication and the zero vector.  Then the 
list obtained is the set $\gB$.

\section{lexicographical order of combinations}
\label{sec:lexic-order-comb}

Let $G,G_1,V,N,\Gam,r$ be as in Section \ref{sec:git-stratification}. 

To assign a multiple array such as $a_{j_1\ccd j_R}$ 
to the subset $\gI= \{\gam_{j_1}\ccd \gam_{j_R}\}$ 
is not a good idea, because it consumes unnecessary 
memory space. So we consider the lexicographical order to $\gI$
and assign a single array. 
To use this idea, we must have a way to go back and forth
between such combinations and their lexicographical orders. 
The purpose of this section is to explain this correspondence
explicitly. 

%As we stated in Section \ref{sec:outline-program}, 
%we assign the lexicographical order to a subset of order $R$ of $\Gam$
%where $1\leq R\leq r$. 

Let 
\begin{math}
\left( 
\begin{smallmatrix}
n \\ m 
\end{smallmatrix}
\right)
\end{math}
be the binomial coefficient. We consider integers 
$n\geq m\geq 0$ except for  $m=n+1$, 
where we define  
\begin{math}
\left(
\begin{smallmatrix}
n \\ n+1
\end{smallmatrix}
\right)
=0.
\end{math}
Note that  
\begin{math}
\left(
\begin{smallmatrix}
n \\ n
\end{smallmatrix}
\right)
= \left(
\begin{smallmatrix}
n \\ 0
\end{smallmatrix}
\right)=1.
\end{math}

Let $N\geq R\geq 1$ be integers. 
Let $A(N,R)$ be the set of sequences 
$c=(c_1\ccd c_R)$ of integers such that
$1\leq c_1<c_2<\cdots<c_R\leq N$ (this is $A_R$ in the previous section). 
For such $c$, let $L(N,R,c)\geq 1$ be its lexicographical order. 
For example, if $N=3,R=2,c_1=2,c_2=3$ 
then $L(3,2,c)=3$. 

We would like to express $L(N,R,c)$ in terms of $c$ and vice versa. 

\begin{prop}
\label{prop:comb2number}
If $N\geq R\geq 1$ and $c\in A(N,R)$ then 
\begin{equation}
\label{eq:Nnrc-formula}
L(N,R,c) = \sum_{i=1}^R
\left( 
\begin{pmatrix}
N-i \\ R-i+1 
\end{pmatrix}
-\begin{pmatrix}
N-c_i \\
R-i+1
\end{pmatrix}
\right)+1
\end{equation}
\end{prop}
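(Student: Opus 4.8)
The plan is to prove the formula by directly counting the number of sequences in $A(N,R)$ that come strictly before $c$ in lexicographical order, and then adding $1$ to obtain $L(N,R,c)$ itself. The key observation is that a sequence $c'=(c'_1\ccd c'_R)$ precedes $c=(c_1\ccd c_R)$ lexicographically exactly when there is some index $i$ with $c'_1=c_1\ccd c'_{i-1}=c_{i-1}$ and $c'_i<c_i$. I would partition the set of predecessors according to this first index $i$ of disagreement. For a fixed $i$, the first $i-1$ entries are forced to equal $c_1\ccd c_{i-1}$, the $i$-th entry $c'_i$ may be any integer with $c_{i-1}<c'_i<c_i$ (where we set $c_0=0$), and then $c'_{i+1}\ccd c'_R$ may be any strictly increasing completion with $c'_i<c'_{i+1}<\cdots<c'_R\leq N$.

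The central counting step is this: once $c'_i$ is chosen, the number of ways to choose the remaining $R-i$ entries from the integers in $\{c'_i+1\ccd N\}$, strictly increasing, is $\binom{N-c'_i}{R-i}$. Summing over the allowed values $c'_i=c_{i-1}+1\ccd c_i-1$ gives the count of predecessors whose first disagreement is at position $i$, namely $\sum_{c'_i=c_{i-1}+1}^{c_i-1}\binom{N-c'_i}{R-i}$. The main technical step is then to evaluate this inner sum in closed form. I would reindex by $j=N-c'_i$, turning it into a sum of the form $\sum_j \binom{j}{R-i}$ over a range of consecutive values of $j$, and apply the hockey-stick identity $\sum_{j=a}^{b}\binom{j}{m}=\binom{b+1}{m+1}-\binom{a}{m+1}$. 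This collapses the inner sum to $\binom{N-c_{i-1}}{R-i+1}-\binom{N-c_i+1}{R-i+1}$.

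After this simplification, summing over $i=1\ccd R$ produces a telescoping structure: the term $\binom{N-c_{i-1}}{R-i+1}$ arising from index $i$ should match, after the standard shift $N-c_{i-1}=N-c_{(i-1)}$ and the adjustment of the lower binomial parameter, against a term from index $i-1$. I expect the bulk of the routine work to lie in bookkeeping this telescoping and reconciling it with the exact form stated in \eqref{eq:Nnrc-formula}, where the claimed summand is $\binom{N-i}{R-i+1}-\binom{N-c_i}{R-i+1}$. In particular I would need to verify that the $\binom{N-i}{R-i+1}$ terms are precisely what remains after telescoping the $c_{i-1}$-dependent pieces, using that the extreme contributions involve $c_0=0$ at one end and the boundary identities $\binom{n}{n}=\binom{n}{0}=1$ and $\binom{n}{n+1}=0$ recorded just before the proposition.

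The main obstacle I anticipate is not any single hard idea but getting the index shifts exactly right: aligning $R-i$ versus $R-i+1$ in the lower binomial slot, handling the $+1$ shift between $N-c_i$ and $N-c_i+1$, and confirming the telescoping boundary terms reproduce the $\binom{N-i}{R-i+1}$ summands and the trailing $+1$. A clean alternative that sidesteps some of this, and which I would keep in reserve as a cross-check, is a short induction on $R$ (or on the entries of $c$): the formula clearly holds for the first sequence $(1,2\ccd R)$, and one checks that incrementing $c$ to its lexicographic successor increases the right-hand side of \eqref{eq:Nnrc-formula} by exactly $1$, again via the binomial recursion $\binom{n}{m}=\binom{n-1}{m}+\binom{n-1}{m-1}$.
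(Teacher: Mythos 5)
Your proposal is correct, and it takes a genuinely different route from the paper's proof. You enumerate all lexicographic predecessors of $c$ in one shot, partitioning them by the first index of disagreement and evaluating each block $\sum_{v=c_{i-1}+1}^{c_i-1}\binom{N-v}{R-i}$ by the hockey-stick identity; this yields the predecessor count $P=\sum_{i=1}^{R}\bigl(\binom{N-c_{i-1}}{R-i+1}-\binom{N-c_i+1}{R-i+1}\bigr)$ with $c_0=0$, and $L(N,R,c)=P+1$. The paper instead treats only the first coordinate: it counts the sequences with $c'_1<c_1$ by complementary counting, $\binom{N}{R}-\binom{N-c_1+1}{R}$, reduces the remainder to $L(N-c_1,R-1,d)$ for the difference sequence $d$, and unrolls the resulting recursion \eqref{eq:L(n,r,i)-recursive} by induction on $R$ before combining terms. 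Your anticipated bookkeeping does close up: applying Pascal's rule $\binom{N-c_i+1}{R-i+1}=\binom{N-c_i}{R-i+1}+\binom{N-c_i}{R-i}$ and shifting the index in the terms $\binom{N-c_i}{R-i}$, your sum telescopes to $P=\binom{N}{R}-1-\sum_{i=1}^{R}\binom{N-c_i}{R-i+1}$, so matching \eqref{eq:Nnrc-formula} requires precisely $\binom{N}{R}-1=\sum_{i=1}^{R}\binom{N-i}{R-i+1}$ --- which is the same diagonal Pascal identity the paper invokes, so the two arguments converge at the final step. As for what each approach buys: yours is induction-free and makes the meaning of the formula transparent (the rank is one plus the number of predecessors); the paper pays for its recursion up front but then reuses \eqref{eq:L(n,r,i)-recursive} as the engine of the proof of Proposition \ref{prop:c-characterize}, the inverse direction, which your standalone count would not directly supply. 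Your fallback idea --- induction on the lexicographic successor --- is a third valid route, also not the paper's.
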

\begin{proof}
Note that $c_i\leq N-R+i$. So 
$N-c_i\geq R-i$ and 
$N-c_i= R-i$ if and only if 
$c_i=N-R+i$. If 
$c_i=N-R+i$ then 
\begin{math}
\left(
\begin{smallmatrix}
N-c_i\\
R-i+1 
\end{smallmatrix}
\right)
= \left(
\begin{smallmatrix}
R-i \\ R-i+1
\end{smallmatrix}
\right)=0. 
\end{math}

If $R=1$ then $L(N,R,c)=c_1$ and 
(\ref{eq:Nnrc-formula}) is valid in this case. 

Suppose that $R>1$. 
We put $d_1=c_2-c_1\ccd d_{R-1}=c_R-c_1$. 
Then $d\in A(N-c_1,R-1)$. 
If $c_1=1$ then $L(N,R,c)=L(N-1,R-1,d)$. 
If $c_1>1$ then the number of 
$c'=(c'_1\ccd c'_R)\in A(N,R)$ such that 
$c'_1<c_1$ is 
\begin{equation*}
\begin{pmatrix}
N \\ R 
\end{pmatrix}
- \begin{pmatrix}
N- c_1+1 \\ R
\end{pmatrix}. 
\end{equation*}
Note that the number of $m$ such that $c_1\leq m\leq N$ 
is $N-c_1+1$. So 
\begin{equation}
\label{eq:L(n,r,i)-recursive}
L(N,R,c) = 
\begin{pmatrix}
N \\ R 
\end{pmatrix}
- \begin{pmatrix}
N- c_1+1 \\ R
\end{pmatrix}
+ L(N-c_1,R-1,d). 
\end{equation}
This formula is valid in the case $c_1=1$ also. 

This formula implies by induction that 
\begin{equation}
\label{eq:L(n,r,i)-middle}
L(N,R,c) = 
\begin{pmatrix}
N \\ R 
\end{pmatrix}
-\begin{pmatrix}
N-c_1+1 \\ R 
\end{pmatrix}
+ \sum_{i=1}^{R-1} 
\left(
\begin{pmatrix}
N-c_1-i \\
R-i
\end{pmatrix}
-\begin{pmatrix}
N-c_{i+1} \\
R-i
\end{pmatrix}
\right)+1.
\end{equation}
This formula is valid for the case $R=1$ also. 

Note that if $0\leq m\leq N$ then 
\begin{equation*}
\begin{pmatrix}
N \\ m 
\end{pmatrix}
= \sum_{i=0}^{m-1}
\begin{pmatrix}
N-i-1 \\ m-i 
\end{pmatrix}+1.
\end{equation*}
So 
\begin{equation*}
\begin{pmatrix}
N \\ R 
\end{pmatrix}
-\begin{pmatrix}
N-c_1+1 \\ R 
\end{pmatrix}
= \sum_{i=0}^{R-1} 
\left(
\begin{pmatrix}
N-i-1 \\ R-i 
\end{pmatrix}
-\begin{pmatrix}
N-c_1-i \\ R-i 
\end{pmatrix}
\right).
\end{equation*}
Since 
\begin{align*}
& \sum_{i=0}^{R-1} 
\begin{pmatrix}
N-i-1 \\ R-i 
\end{pmatrix}
= \sum_{i=1}^R 
\begin{pmatrix}
N-i \\ R-i+1 
\end{pmatrix}
\end{align*}
and
\begin{align*}
& -\sum_{i=0}^{R-1} 
\begin{pmatrix}
N-c_1-i \\ R-i 
\end{pmatrix}
+ \sum_{i=1}^{R-1} 
\left(
\begin{pmatrix}
N-c_1-i \\
R-i
\end{pmatrix}
-\begin{pmatrix}
N-c_{i+1} \\
R-i
\end{pmatrix}
\right)
= -\sum_{i=1}^R
\begin{pmatrix}
N-c_i \\
R-i+1
\end{pmatrix},
\end{align*}
we obtain (\ref{eq:Nnrc-formula}). 
\end{proof}

We consider the opposite direction. 
Let $m>0$ be an integer such that 
\begin{math}
m\leq \left(
\begin{smallmatrix}
N \\ R 
\end{smallmatrix}
\right).
\end{math}
We would like to find $c\in A(N,R)$ such that 
$L(N,R,c)=m$. 

We put $m_1=m$. 
For $i=2\ccd R$, we put  
\begin{equation*}
m_i = 
m - \sum_{l=1}^{i-1} 
\left(
\begin{pmatrix}
N-l \\ R-l+1
\end{pmatrix}
- \begin{pmatrix}
N-c_l \\ R-l+1
\end{pmatrix}
\right). 
\end{equation*}
Note that $m_i$ does not depend on $c_i\ccd c_R$. 

\begin{prop}
\label{prop:c-characterize}
If  $L(N,R,c)=m$ then 
$c_i$ is characterized by the following condition:
\begin{equation*}
\begin{pmatrix}
N-i+1 \\ R-i+1
\end{pmatrix}
- \begin{pmatrix}
N-c_i+1 \\ R-i+1
\end{pmatrix}
< m_i \leq 
\begin{pmatrix}
N-i+1 \\ R-i+1
\end{pmatrix}
- \begin{pmatrix}
N-c_i \\ R-i+1
\end{pmatrix}. 
\end{equation*}
\end{prop}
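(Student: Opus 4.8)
The plan is to invert the explicit formula (\ref{eq:Nnrc-formula}) for $L(N,R,c)$ by recovering the coordinates $c_1,c_2,\dots,c_R$ one at a time, in increasing order of index. The definition of $m_i$ is engineered precisely so that $m_i$ captures "how much lexicographical order remains to be accounted for by the tail $c_i,\dots,c_R$ after the earlier coordinates $c_1,\dots,c_{i-1}$ have been fixed." So the heart of the matter is a single inequality, stated for a generic index $i$, that pins down $c_i$ uniquely from $m_i$ and the already-determined $c_1,\dots,c_{i-1}$.

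First I would establish the base case $i=1$. Here $m_1=m=L(N,R,c)$, and the claimed inequality reads
\begin{equation*}
\begin{pmatrix} N \\ R \end{pmatrix}
- \begin{pmatrix} N-c_1+1 \\ R \end{pmatrix}
< m_1 \leq
\begin{pmatrix} N \\ R \end{pmatrix}
- \begin{pmatrix} N-c_1 \\ R \end{pmatrix}.
\end{equation*}
The right-hand inequality follows from the recursive formula (\ref{eq:L(n,r,i)-recursive}), since $L(N-c_1,R-1,d)\geq 1$ and the maximal value of $L(N-c_1,R-1,d)$ over $d\in A(N-c_1,R-1)$ is exactly $\binom{N-c_1}{R-1}$, which combines with $\binom{N-c_1+1}{R}-\binom{N-c_1}{R-1}=\binom{N-c_1}{R}$ (Pascal's rule) to give the stated bound. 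The strict left-hand inequality is the statement that $c_1$ cannot be smaller, i.e.\ that all sequences with first entry $c_1+1$ or larger have order exceeding the count $\binom{N}{R}-\binom{N-c_1+1}{R}$; this is immediate from the counting argument already used in the proof of Proposition \ref{prop:comb2number}, where $\binom{N}{R}-\binom{N-c_1+1}{R}$ was shown to be the number of $c'\in A(N,R)$ with $c_1'<c_1$.

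For the inductive step I would reduce the index-$i$ inequality to the index-$1$ inequality by the same shift used in (\ref{eq:L(n,r,i)-recursive}). Setting $d=(c_2-c_1,\dots,c_R-c_1)\in A(N-c_1,R-1)$ and iterating, one checks that $m_i$ equals the "first-coordinate count" for the residual sequence $(c_i,\dots,c_R)$ regarded as an element of $A(N-i+1,R-i+1)$ after the appropriate index translation; then the index-$i$ bound is just the base case applied to this smaller problem. Concretely, I would verify by induction on $i$ that $m_i = L(N-i+1,\,R-i+1,\,c^{(i)})$ where $c^{(i)}$ is the renormalized tail, using the definition of $m_i$ together with (\ref{eq:Nnrc-formula}); the telescoping of binomial differences that drives this is exactly the identity $\binom{N}{m}=\sum_{i=0}^{m-1}\binom{N-i-1}{m-i}+1$ invoked in the previous proof. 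Once $m_i = L(N-i+1,R-i+1,c^{(i)})$ is in hand, the desired inequality is literally the base case for this reduced sequence.

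The main obstacle I expect is purely bookkeeping: keeping the index shifts consistent so that the tail $(c_i,\dots,c_R)$, the parameter pair $(N-i+1,R-i+1)$, and the partial sum defining $m_i$ all line up, and confirming the edge behavior when $c_i=N-R+i$ (the largest admissible value), where one of the binomial coefficients collapses to $\binom{R-i}{R-i+1}=0$ as noted at the start of Proposition \ref{prop:comb2number}'s proof. No genuinely hard inequality arises; the content is that $m_i$ lands in exactly one of the half-open intervals $\bigl(\binom{N-i+1}{R-i+1}-\binom{N-c+1}{R-i+1},\,\binom{N-i+1}{R-i+1}-\binom{N-c}{R-i+1}\bigr]$ as $c$ ranges over the admissible values, and these intervals partition $\{1,\dots,\binom{N-i+1}{R-i+1}\}$, which gives both existence and uniqueness of $c_i$.
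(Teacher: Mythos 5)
Your proof is correct, but its inductive step runs along a genuinely different track from the paper's, so a comparison is worth recording. The paper iterates the value-dependent shift of (\ref{eq:L(n,r,i)-recursive}): subtracting $c_1$, then $c_2$, and so on, it first characterizes $c_i$ by the intermediate condition (\ref{eq:ij-condition}), whose intervals are anchored at $\binom{N-c_{i-1}}{R-i+1}$ and involve an auxiliary quantity $m_i'$, and it must then spend a second stage of binomial manipulation proving $m_i' + \binom{N-i+1}{R-i+1} - \binom{N-c_{i-1}}{R-i+1} = m_i$ in order to reach the stated form. Your key lemma, $m_i = L(N-i+1,\,R-i+1,\,c^{(i)})$ with $c^{(i)}_j = c_{i+j-1}-(i-1)$, bypasses that conversion entirely: it follows in one line from (\ref{eq:Nnrc-formula}), because subtracting from $m$ the partial sum defining $m_i$ simply deletes the first $i-1$ terms of that formula, and re-indexing what remains gives exactly the formula for the shifted tail (no telescoping identity is actually needed here, contrary to your remark). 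Applying your base case to the reduced problem with parameters $(N-i+1,R-i+1)$ then lands precisely on the intervals in the statement. One slip should be corrected: you introduce the reduction as ``the same shift used in (\ref{eq:L(n,r,i)-recursive})'', i.e.\ $d_j=c_{j+1}-c_1$; iterating that shift places the tail in $A(N-c_{i-1},R-i+1)$, not in $A(N-i+1,R-i+1)$, so the renormalization in your key lemma must be the index shift $c_j\mapsto c_j-(i-1)$ (legitimate since $c_i\geq i$), which is what your concrete verification via (\ref{eq:Nnrc-formula}) actually uses. In exchange for invoking Proposition \ref{prop:comb2number} once more on the reduced sequence, your route eliminates the paper's second-stage identity check; the paper's route, conversely, needs only the recursion in its induction but pays for it with the final interval-translation computation.
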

\begin{proof}
By the consideration of (\ref{eq:L(n,r,i)-recursive}), 
$c_1\geq j$ if and only if 
\begin{math}
m> \left(
\begin{smallmatrix}
N \\ R  
\end{smallmatrix}
\right)
-\left(
\begin{smallmatrix}
N-j+1 \\ R 
\end{smallmatrix}
\right). 
\end{math}
Therefore, $c_1$ is characterize by the following formula:
\begin{equation*}
\begin{pmatrix}
N \\ R
\end{pmatrix}
-\begin{pmatrix}
N-c_1+1 \\ R
\end{pmatrix}
< m \leq 
\begin{pmatrix}
N \\ R
\end{pmatrix}
-\begin{pmatrix}
N-c_1 \\ R 
\end{pmatrix}. 
\end{equation*}
So the statement of the proposition holds for $i=1$.

Let $d_1=c_2-c_1\ccd d_{R-1}=c_R-c_1$.
Then $d\in A(N-c_1,R-1)$. 
We put $m_2' = L(N-c_1,R-1,d)$. By (\ref{eq:L(n,r,i)-recursive}), 
\begin{equation*}
m_2' = m - \left(
\begin{pmatrix}
N \\ R
\end{pmatrix}
-\begin{pmatrix}
N-c_1+1 \\ R
\end{pmatrix}
\right)
\end{equation*}
Since $c_1+d_1=c_2$, $c_2$ is characterized 
by the following formula:
\begin{equation*}
\begin{pmatrix}
N-c_1 \\ R-1 
\end{pmatrix}
-\begin{pmatrix}
N-c_2+1 \\ R-1 
\end{pmatrix}
< m_2' \leq
\begin{pmatrix}
N-c_1 \\ R-1 
\end{pmatrix}
-\begin{pmatrix}
N-c_2 \\ R-1 
\end{pmatrix}. 
\end{equation*}

By continuing this process, for $i=2\ccd R$, 
$c_i$ is characterized by the following 
condition: 
\begin{equation}
\label{eq:ij-condition}
\begin{pmatrix}
N-c_{i-1} \\ R-i+1 
\end{pmatrix}
-\begin{pmatrix}
N-c_i+1 \\ R-i+1 
\end{pmatrix}
< m_i' \leq
\begin{pmatrix}
N-c_{i-1} \\ R-i+1 
\end{pmatrix}
-\begin{pmatrix}
N-c_i \\ R-i+1 
\end{pmatrix}
\end{equation}
where for $i=2\ccd R$, 
\begin{equation*}
m_i' = m - \left(\begin{pmatrix}
N \\ R 
\end{pmatrix}
-\begin{pmatrix}
N-c_1+1 \\ R
\end{pmatrix}
\right)
-\sum_{l=1}^{i-2}
\left(
\begin{pmatrix}
N-c_l \\ R-l 
\end{pmatrix}
-\begin{pmatrix}
N-c_{l+1}+1 \\ R-l 
\end{pmatrix}
\right). 
\end{equation*}

We put 
\begin{math}
m_i''= m_i' 
+ \begin{pmatrix}
N-i+1 \\ R-i+1 
\end{pmatrix}
-\begin{pmatrix}
N-c_{i-1} \\ R-i+1
\end{pmatrix}.
\end{math}
Since
\begin{equation*}
\begin{pmatrix}
N \\ R 
\end{pmatrix} 
- \begin{pmatrix}
N-i+1 \\ R-i+1
\end{pmatrix}
= \sum_{l=1}^{i-1} 
\begin{pmatrix}
N-l \\ R-l+1
\end{pmatrix}
\end{equation*}
and
\begin{align*}
& \begin{pmatrix}
N-c_1+1 \\ R 
\end{pmatrix}
- \sum_{l=1}^{i-2}
\left(
\begin{pmatrix}
N-c_l \\ R-l 
\end{pmatrix}
-\begin{pmatrix}
N-c_{l+1}+1 \\ R-l 
\end{pmatrix}
\right) 
- \begin{pmatrix}
N-c_{i-1} \\ R-i+1
\end{pmatrix} \\
& = \begin{pmatrix}
N-c_1 \\ R
\end{pmatrix}
+ \begin{pmatrix}
N-c_2 \\ R-1
\end{pmatrix}
+ \cdots + 
\begin{pmatrix}
N-c_{i-1} \\ R-i+2 
\end{pmatrix}
= \sum_{l=1}^{i-1} 
\begin{pmatrix}
N-c_l \\ R-l+1
\end{pmatrix},
\end{align*}
we have 
\begin{equation*}
m_i'' = m - \sum_{l=1}^{i-1} 
\left(
\begin{pmatrix}
N-l \\ R-l+1
\end{pmatrix}
- \begin{pmatrix}
N-c_l \\ R-l+1
\end{pmatrix}
\right) = m_i.
\end{equation*}
This implies that the condition (\ref{eq:ij-condition})
is equivalent to the condition in the statement of 
this proposition. 
\end{proof}

Let 
\begin{equation}
\label{eq:a_ij}
a_{i,j} = 
\begin{pmatrix}
N-i \\ R-i+1 
\end{pmatrix}
- \begin{pmatrix}
N-j \\ R-i+1
\end{pmatrix}
\end{equation}
for $i=1\ccd R$, $j=i\ccd N-R+i$ and 
\begin{equation}
\label{eq:b_ij}
b_{i,j} = 
\begin{pmatrix}
N-i+1 \\ R-i+1 
\end{pmatrix}
- \begin{pmatrix}
N-j \\ R-i+1
\end{pmatrix}
\end{equation}
for $i=1\ccd R$, $j=i\ccd N-R+i$. 

Proposition \ref{prop:comb2number} implies that  
if $c\in A(N,R)$ and $m=L(N,R,c)$ then 
\begin{math}
m = \sum_{i=1}^{R} a_{i,c_i}. 
\end{math}
Also Proposition \ref{prop:c-characterize} implies that 
if $m_1=m$, $m_i = m-\sum_{l=1}^{i-1} a_{l,c_l}$ ($i=2\ccd R$) 
then $c_i$ is the smallest integer $i\leq j\leq N-R+i$ such that 
$b_{i,j}\geq m_i$.

\section{Algorithms}
\label{sec:programs}

In this section, we describe some details of algorithms 
to find the set $\gB$ for the \pv s (1)--(4). 
We describe the algorithms so that they do not 
depend on particular computer languages here. 
As we stated in Section \ref{sec:outline-program}, 
we assume that arrays start from the index $1$ 
even though arrays start from the index $0$ 
in some computer languages. 
The actual computer programs are made public 
in the second author's home page 
(https://www.math.kyoto-u.ac.jp/\~{}yukie/Strata-pub.zip). 
The letters we use in algorithms are different
from those used in actual programs, since 
in actual programs, variables like \verb|i1|, \verb|i2| 
are used and it may be confusing to use such names 
to explain the algorithms.

We use the formulation of Sections \ref{sec:git-stratification},
\ref{sec:outline-program}. 
For the \pv s (1)--(4), let $G_1,T,T_0,T_1,\gt^*,\gt^*_+,\weyl$ be as in 
Section \ref{sec:outline-program}. 

We consider Steps 1--3 of Section \ref{sec:outline-program}.

\subsection{Step 1}

Let $B_i$ be the set in Section \ref{sec:outline-program}
(see the paragraph above Proposition \ref{prop:gB-computation}). 
We find a set of representatives for $\weyl\backslash B_i$ 
in this step. 

It is fairly easy to generate permutations. 
We assume that elements of $\gS_2\ccd \gS_8$ 
have been generated and stored in a file
as $P^{(2)}_{ij}\ccd P^{(8)}_{ij}$. For example, 
\begin{equation*}
P^{(3)}_{11}=1,P^{(3)}_{12}=2,P^{(3)}_{13}=3\ccd 
P^{(3)}_{61}=3,P^{(3)}_{62}=2,P^{(3)}_{63}=1. 
\end{equation*}

Let $N=18,30,40,56$ for the \pv s (1)--(4) respectively. 
Let $\mathbbm a_1\ccd \mathbbm a_N$ be the coordinate 
vectors defined in Section \ref{sec:outline-program}
and $\gam_1\ccd \gam_N\in\gt^*$ their weights. 
To consider a subset $\gI\sub \{\gam_1\ccd \gam_N\}$
such that $\# \gI=R$ is the same 
as to consider combinations of $R$ numbers
from $N$ numbers.  If $\gI=\{\gam_{i_1}\ccd \gam_{i_R}\}$
where $i_1<\cdots<i_R$ then we assign the lexicographical order
of $\{i_1\ccd i_R\}$ to $\gI$. 
As we stated in Introduction, we use the lexicographical order 
to keep track of $\gI$.

For each $R$, we carry out algorithms in Steps 1,2. 
So algorithms in these steps depend on $R$. 
In Step 3, we combine results of Steps 1,2 for 
all $R$, remove duplication and obtain necessary informations
for each $\be\in\gB$.

If $v=(v_1\ccd v_R)$ is an array of distinct integers then 
the algorithm to sort $v_1\ccd v_R$ is well-known 
and we leave the details to the reader. It returns
an array $w_1<\dots< w_R$ obtained from changing the
order of $v_1\ccd v_R$.  Also, it is easy to 
compute binomial coefficients 
\begin{math}
\left(
\begin{smallmatrix}
n \\ m
\end{smallmatrix}
\right)
\end{math}
by Pascal's identity and we will not describe the details.

For an array of distinct $R$ integers $v=(v_1\ccd v_R)$ such that 
$1\leq v_1\ccd v_R\leq N$, let ${\bf CombN}(v)$ be the 
function which sorts $v_1\ccd v_R$ so that 
$v_1<\cdots <v_R$ and 
returns the lexicographical order of $v$.
For
\begin{math}
1\leq m\leq 
\left(
\begin{smallmatrix}
N \\ R
\end{smallmatrix}
\right), 
\end{math}
let ${\bf NComb}(m,v)$ be the function which makes $v$ 
the sequence $(v_1\ccd v_R)$ such that 
$1\leq v_1<\cdots<v_R\leq N$ and that ${\bf CombN}(v)=m$. 
When we use these functions, we assume that 
the values of $N,R$ are set. 
These functions $\text{\bf CombN}(v)$, $\text{\bf NComb}(m,v)$  
can be computed by Propositions \ref{prop:comb2number}, 
\ref{prop:c-characterize} as follows. 
The values of $a_{i,j}$ and $b_{i,j}$ in (\ref{eq:a_ij}), 
(\ref{eq:b_ij}) are heavily used. 
So the following values should be computed before other algorithms. 

\begin{itemize}
\item[(1)]
$a_{i,j}$ in (\ref{eq:a_ij}) for $i=1\ccd R$, $j=i\ccd N-R+i$. 
\item[(2)]
$b_{i,j}$ in (\ref{eq:b_ij}) for $i=1\ccd R$, $j=i\ccd N-R+i$.
\end{itemize}

\begin{algo}
\label{algo:programs-NComb}
\upshape
$\;$

\vskip 5pt
\noindent
(i) {Name:} ${\bf CombN}(v)$ 

\hskip 5pt {Require:} $v=(v_1\ccd v_R)$: an array of 
elements of $\bbn$.  

\hskip 5pt
{Description:} 
It returns the lexicographical order of $v$ after sorting 
as ${\bf CombN}(v)$.  

\hskip 5pt
{Local variables:} $i\in {\bbn}$.  

\vskip 10pt
\noindent
1. Sort $v$ so that $v_1<\cdots<v_R$. 

\vskip 5pt
\noindent
2. Return the value $\sum_{i=1}^Ra_{i,v_i}$ as ${\bf CombN}(v)$. 
 
\vskip 5pt
\noindent
(ii) {Name:} ${\bf NComb}(m,v)$

\hskip 7pt {Require:}
$m\in \bbn$, $v=(v_1\ccd v_R)$: an array of elements of $\bbn$. 

\hskip 7pt
{Description:} It makes $v$ a sorted array whose 
lexicographical order is $m$. 

\hskip 7pt
{Local variables:} $i,j\in {\bbn}$, 
$l=(l_1\ccd l_R)$: an array of $R$ elements of $\bbn$.

\vskip 5pt
\noindent
1. $l_1\leftarrow m$ and $j\leftarrow 1$. 

\vskip 5pt
\noindent
2. If $b_{1,j}< l_1$ then $j \leftarrow j+1$ and repeat. 

\vskip 5pt
\noindent
3. $v_1\leftarrow j$. 

\vskip 5pt
\noindent
4. For $i=2\ccd R$ do the following. 

\vskip 5pt
4-a. $l_i \leftarrow m-\sum_{l=1}^{i-1} a_{l,v_l}$  and $j\leftarrow i$.

\vskip 5pt
4-b. If $b_{i,j}< l_i$ then $j \leftarrow j+1$ and repeat. 

\vskip 5pt
4-c. $v_i\leftarrow j$. 

\vskip 5pt
This finishes the algorithm. \hfill $\diamond$
\end{algo}

Note that ${\bf NComb}(m,v)$ is a ``void type'' function with 
no returned value. 

Now we consider the \pv s (1)--(4). 
We can make algorithms so that they are common 
for the cases (1)--(4) except for definitions 
of some constants and some subroutines.  
So we basically explain algorithms for the case (3). 
In the following, $(G,V)$ is the \pv{} (3).  
We consider Step 1 of Section \ref{sec:outline-program}.

We first have to describe the action of $\weyl\cong \gS_5\times \gS_4$ 
on $\{\gam_1\ccd \gam_{40}\}$. 
The order of $\weyl$ is $2880$. Each element of $\weyl$ 
induces an element of $\gS_{40}$. So to describe 
the action of $\weyl$ on $\{\gam_1\ccd \gam_{40}\}$, 
it is enough to assign an array of $40$ integers.

Elements of $\weyl$ are pairs $(t_1,t_2)$ of permutations 
$t_1\in\gS_5,t_2\in\gS_4$. They are arrays of $5,4$ integers.
Let $t_1(i)$ ($i=1\ccd 5$), $t_2(j)$ ($j=1\ccd 4$) be the values of 
$t_1,t_2$. Since the coordinate system of $V$ involves $\wedge^2 \aff^5$,
we have to consider combinations of $2$ elements of $\{1\ccd 5\}$. 
So even though we set $N:=40,R:=1\ccd 7$ in the main algorithm 
of Step 1, to describe the action of $\weyl$ on 
$\{\gam_1\ccd \gam_{40}\}$, we set $N:=5,R:=2$ to use 
the functions $\text{\bf CombN}(v),\text{\bf NComb}(m,v)$.

We define some constants as follows. 
\begin{equation}
\label{eq:constants-1}
\begin{aligned}
& N:= 5,\; R:= 2,\;   A:= 5,\;   B:= 4,\; C:= 40, \\
& L_1:= 120,\;   L_2:= 24,\;   L:= 2880,\;   N_1:= 10.	
\end{aligned}
\end{equation}

We consider the lexicographical order of 
combinations of $2$ numbers from $\{1\ccd 5\}$. 
We order coordinates of $V$ by associating the order 
$1\ccd 10$ (resp. $11\ccd 20$, etc.,) to coordinates 
whose second tensor factor 
is $[1,0,0,0]$ (resp. $[0,1,0,0]$, etc.,).  
Let $i=1\ccd 10,j=1\ccd 4$ and
$v=(v_1,v_2)$ be the $i$-th combination.  
Then by $(t_1,t_2)$, the ($N_1(j-1)+i$)-th coordinate 
is mapped to the ($N_1(t_2(j)-1)+k$)-th coordinate 
where $k$ is the lexicographical order of 
the combination $(t_1(v_1),t_1(v_2))$ (after sorted).

\begin{algo}
\label{algo:programs-weyl-action54}
\upshape

{Name:} ${\bf makeweyl54}(t_1,t_2,t_3)$ 

\vskip 5pt
{Require:} $t_1\in\gS_A,t_2\in\gS_B,t_3\in\gS_C$. 

{Description:} 
It makes $t_3$ the result of the action of $(t_1,t_2)\in\weyl$ 
on $\{\gam_1\ccd \gam_C\}$. 

{Local variables:} $i,j,k\in\bbn$, 
$v=(v_1,v_2),w=(w_1,w_2)$: arrays of elements of $\bbn$.

\vskip 10pt
\noindent
1. For $i=1\ccd N_1$, $j=1\ccd B$, do the following. 

\vskip 5pt
1-a. ${\bf NComb}(i,v)$.

\vskip 5pt
1-b. $w_1\leftarrow t_1(v_1),\; w_2\leftarrow t_1(v_2)$.

\vskip 5pt
1-c. $k \leftarrow {\bf CombN}(w)$, $t_3(N_1(j-1)+i)\leftarrow N_1(t_2(j)-1)+k$. 

\vskip 5pt
This is the end of the function
${\bf makeweyl54}(t_1,t_2,t_3)$. \hfill $\diamond$
\end{algo}

\begin{algo}
\label{algo:programs-weyl-list}
\upshape
{Description:} This algorithm lists the action of all elements of 
$\weyl=\gS_A\times \gS_B$ on $\{\gam_1\ccd \gam_C\}$. Since it is not a function, 
it does not require any variable as an input. However, 
constants in (\ref{eq:constants-1})
have to be defined and elements of $\gS_A,\gS_B$ 
have to be read from a file as 
$P^{(5)}_{ij}$ ($i=1\ccd L_1,j=1\ccd A$), 
$P^{(4)}_{ij}$ ($i=1\ccd L_2,j=1\ccd B$).

\vskip 5pt
{Local variables:}
$i,j,k\in\bbn$,  $W_k\in \gS_C$ ($k=1\ccd L$). 

\vskip 10pt
\noindent
1. Initialize $k\leftarrow 1$. 

\vskip 5pt
\noindent
2. For $i=1\ccd L_1$ and $j=1\ccd L_2$, do the following.

\vskip 5pt
2-a. ${\bf makeweyl54}(P^{(5)}_{i*},P^{(4)}_{j*},W_k)$.

\vskip 5pt 
2-b. $k\leftarrow k+1$ (every time 2-a is done for a pair $(i,j)$). 

\vskip 5pt 

Note that for fixed $i$, 
$P^{(5)}_{i*}\in \gS_A$ ($*=1\ccd A$).  
We regard $P^{(4)}_{j*}\in \gS_B$ similarly.  
 
\vskip 5pt
\noindent
3. Record $W_k$ ($k=1\ccd L$) in a file.  

\vskip 5pt
This finishes the algorithm. 
\hfill $\diamond$
\end{algo}

After this algorithm, we set $N:=40,R:=1\ccd 7$. 	
Now we consider the algorithm to reduce the number of cases.

We have to consider simplices of dimensions $0\ccd 6$. 
Since the Weyl group acts transitively on the set of coordinates, 
$B_1$ (see Proposition \ref{prop:gB-computation}) is a 
single $\weyl$-orbit.  The weight of the last coordinate 
is in the Weyl chamber and so $S_1$ consists of the weight 
of the last coordinate.   

Let $R$ be the number of 
vectors which determine an ($R-1$)-dimensional 
simplex.  We consider the cases $R=2\ccd 7$. 

\begin{algo}
\label{algo:step1-main}
\upshape
{\bf (The main algorithm for Step 1)}\,
{Description:} This algorithm determines $S_7$
of Proposition \ref{prop:gB-computation}. 
Constants $A,B,C,L_1,L_2,L$ have to be defined as in (\ref{eq:constants-1}).
Define
\begin{math}
R:= 7,M:= \left(
\begin{smallmatrix}
40 \\ 7
\end{smallmatrix}
\right) = 18643560. 
\end{math}
We set the environment so that we can use the functions 
${\bf CombN},{\bf NComb}$ for $N:=40,R:=7$. 
The list of $W_k\in\gS_C$ ($k=1\ccd L=2880$)
have to be read from a file. 

\vskip 5pt
{Local variables:} (i) $i,j,k,l,m\in\bbn$.

\quad
(ii) $t_1=(t_1(1)\ccd t_1(R))$, 
$t_2=(t_2(1)\ccd t_2(R))$: arrays of elements of $\bbn$. 

\quad
(iii) $X=(X_1\ccd X_M)$: an array of elements of $\bbn$. 
($X_i$ is $X_{R,i}$ of Section \ref{sec:outline-program}.) 

\quad
(iv) $v=(v_{i,j})$: an $M\times R$ matrix 
with entries in $\bbn$.

\vskip 5pt
\noindent
1. Initialize $X_i \leftarrow 0$ for $i=1\ccd M$. 

\vskip 5pt
\noindent
2. Initialize $m\leftarrow 0$.

\vskip 5pt
\noindent
3. For $i=1\ccd M$, if $X_i=0$, 
do the following (if $X_i\not=0$ then do nothing).  

\vskip 5pt
3-a. $m\leftarrow m+1$.

\vskip 5pt
3-b. For $j=1\ccd R$, $v_{m,j}\leftarrow t_1(i)$. 

\vskip 5pt
3-c.  $X_i\leftarrow 1$.

\vskip 5pt
3-d. ${\bf NComb}(i,t_1)$.

\vskip 5pt
3-e. For $j=1\ccd L$, do the following. 

\vskip 5pt
\quad \quad 
3-e-1. For $k=1\ccd R$, $t_2(k) \leftarrow W_j(t_1(k))$.  

\vskip 5pt
\quad \quad  
3-e-2. $l={\bf CombN}(t_2)$ and if $l>i$, $X_l \leftarrow 2$.  

%\vskip 5pt
%3-f. $i\leftarrow i+1$ and go back to 3-a. 

\vskip 5pt
\noindent
4. Record $v$.  

\vskip 5pt
This finishes the algorithm. 
\hfill $\diamond$
\end{algo}

\vskip 5pt
Note that in the step 3-e-1, $t_2$ is the result of the action of 
the Weyl group element $W_j$ to $t_1$.
Even though the size of $v$ is $M\times R$, 
$v_i$ is recorded only for $i$ from $1$ to the final value of $m$. 
It turns out that the final value of $m$ is $7891$ in this case.

For $R:=6\ccd 2$, we simply change the value of 
$M$ to 
\begin{math}
M := \left(
\begin{smallmatrix}
N \\ R
\end{smallmatrix}
\right)
\end{math}
and Algorithm \ref{algo:step1-main} works.

For the \pv{} (2), we can record the action of $\weyl$ in the same manner as 
in Algorithms \ref{algo:programs-weyl-action54}, \ref{algo:programs-weyl-list}
after changing the constants as follows (but one has to use 
$P^{(6)}_{ij},P^{(2)}_{ij}$ in Algorithm \ref{algo:programs-weyl-list}). 

\vskip 10pt

\begin{center}
 
\begin{tabular}{|c|c|c|c|c|c|c|c|c|}
\hline
$N$ & $R$ & $A$ & $B$ & $C$ & $L_1$ 
& $L_2$ & $L$ & $N_1$  \\
\hline
6 & 2 & 6 & 2 & 30 & 720 & 2 & 1440 & 15  \\
\hline
\end{tabular}
 
\vskip 10pt

\end{center}
 
The rank of the group is $6$ and so for Step 1, 
we have to consider $R=6\ccd 2$ (the case $R=1$ is obvious). 
To carry out Algorithm \ref{algo:step1-main}, 
we have to change the values of $N$ to $30$, 
$R:=6\ccd 2$. For $R=6$, we have to define $M:=2035800$ 
and Algorithm \ref{algo:step1-main} works
assuming that the action of $\weyl$ is recorded as 
$W_j$ ($j=1\ccd L$). The situation is similar for other values 
of $R$.

For the \pv{} (1), we use the following constants
to record the action of $\weyl$. 
Since all factors of $V$ are standard \rep s, 
we do not have to use the functions $\text{\bf CombN},\text{\bf NComb}$  
for Algorithms \ref{algo:programs-weyl-action54}, 
\ref{algo:programs-weyl-list}. 

\vskip 10pt

\begin{center}
 
\begin{tabular}{|c|c|c|c|c|c|c|c|c|}
\hline
$A$ & $B$ & $C$ & $L_1$ 
& $L_2$ & $L$ \\
\hline
3 & 2 & 18 & 6 & 2 & 72 \\
\hline
\end{tabular}
 
\vskip 10pt

\end{center}

\begin{algo}
\label{algo:programs-weyl-332}
\upshape
{Name:} ${\bf makeweyl332}(t_1,t_2,t_3,t_4)$ 

\vskip 5pt
{Require:} $t_1,t_2\in\gS_A,t_3\in\gS_B,t_4\in \gS_C$. 

{Description:} It makes $t_4$ the result of the action of 
$(t_1,t_2,t_3)\in\weyl$ on $\{\gam_1\ccd \gam_C\}$. 

{Local variables:}
$i,j,k\in \bbn$. 

\vskip 10pt
\noindent
1. For $i=1\ccd B$, $j=1\ccd A$, $k=1\ccd A$, 

\vskip 5pt
$t_4(9(i-1)+3(j-1)+k) \leftarrow 9(t_3(i)-1)+3(t_2(j)-1)+t_1(k)$.        

\vskip 5pt
This is the end of the function. 
\hfill $\diamond$
\end{algo}

It is easy to make an algorithm for the \pv{} (1) similar to 
Algorithm \ref{algo:programs-weyl-list} and so we do not provide the
details. 

The rank of the group is $5$ and so for Step 1, 
we have to consider $R=5\ccd 2$.  
To carry out Algorithm \ref{algo:step1-main}, 
we have to change the value of $N$ to $18$, 
$R:=5\ccd 2$. For $R=5$, we have to define $M:=8568$ 
and Algorithm \ref{algo:step1-main} works 
assuming that the action of $\weyl$ is recorded as 
$W_j$ ($j=1\ccd L$). 
The situation is similar for other values 
of $R$.

For the \pv{} (4), we us the following constants 
to documents elements of $\weyl$.

\vskip 10pt

\begin{center}
 
\begin{tabular}{|c|c|c|c|c|c|c|c|c|}
\hline
$N$ & $R$ & $A$ & $C$ & $L$ \\
\hline
8 & 3 & 8 & 56 & 40320 \\
\hline
\end{tabular}

\end{center}
 
\vskip 10pt

Note that we have to set $N:=8,R:=3$ to use 
the functions ${\bf CombN},{\bf NComb}$. 

\begin{algo}
\label{algo:programs-weyl-actiontri8}
\upshape
{Name:} ${\bf makeweyltri8}(t_1,t_2)$ 

\vskip 5pt
{Require:} $t_1\in\gS_A,t_2\in\gS_C$. 

{Description:} It makes $t_2$ the result of the action of 
$t_1\in\weyl$ on $\{\gam_1\ccd \gam_C\}$. 

{Local variables:}
(i) $i,j,k\in \bbn$, 

\quad
(ii) $v=(v_1,v_2,v_3),w=(w_1,w_2,w_3)$: 
arrays of elements of $\bbn$. 

\vskip 10pt

For $i=1\ccd C$, do the following. 

\vskip 5pt
\noindent
1. ${\bf NComb}(i,v)$.

\vskip 5pt
\noindent
2. $w_1\leftarrow t_1(v_1),\; w_2\leftarrow t_1(v_2),\; w_3\leftarrow t_1(v_3)$.

\vskip 5pt
\noindent
3. $j \leftarrow {\bf CombN}(w)$, $t_2(i)\leftarrow j$. 

\vskip 5pt
This is the end of the function. 
\hfill $\diamond$
\end{algo}

It is easy to make an algorithm for the \pv{} (4) similar to 
Algorithm \ref{algo:programs-weyl-list} and so we do not provide the
details. 

The rank of the group is $7$ and so for Step 1, 
we have to consider $R=7\ccd 2$.  
To carry out Algorithm \ref{algo:step1-main}, 
we have to change the value of $N$ to $56$, 
$R:=7\ccd 2$. For $R=7$, we have to define $M:=231917400$  
and Algorithm \ref{algo:step1-main} works 
assuming that the action of $\weyl$ is recorded as 
$W_j$ ($j=1\ccd L$). 
The situation is similar for other values 
of $R$. 

\subsection{Step 2}

We now explain algorithms in Step 2 of Section \ref{sec:outline-program}.
We now have to be sensitive to 
the number of digits of integers. So one has to use 
a computer language which allows an arbitrary number of digits.

We assume that $\gam_1\ccd \gam_N$ (the weights of coordinates) 
are recorded in a file 
(see Section \ref{sec:outline-program} for the values of $\gam_1\ccd \gam_N$).  
Let $D$ be the number of entries of elements of $\gt^*$. 
Explicitly, $D=8,8,9,8$ 
for the \pv s (1)--(4) respectively.

We list some basic easy functions used in 
Steps 2,3. We do not provide the details of the algorithms.
As we stated in Step 1, we assume that 
the values of $N,R$ have to be set before algorithms are carried out.

\begin{algo}
\label{algo:programs-basic-easy} 
\upshape
{\bf (Basic easy functions)}

\vskip 5pt
\noindent
(i) Name: ${\bf sorder}(a)$

\hskip 4pt
{Require:} $a=(a_1\ccd a_D)$: an array of elements of $\Q$. 

\hskip 4pt
{Description:} It sorts $a$ to an element of $\gt^*_+$ 
and returns the result. For example,  
for the \pv s (1), for 
$a=(a_{11},a_{12},a_{13},a_{21},a_{22},a_{23},a_{31},a_{32})$, 
$b=(b_{11},b_{12},b_{13},b_{21},b_{22},b_{23},b_{31},b_{32})$
is obtained  by sorting $(a_{11},a_{12},a_{13})$, 
$(a_{21},a_{22},a_{23})$ and $(a_{31},a_{32})$ 
so that $b_{11}\leq b_{12}\leq b_{13}$, 
$b_{21}\leq b_{22}\leq b_{23}$ and 
$b_{31}\leq b_{32}$.
Other cases are similar. This is a variation of 
the standard sorting algorithm.

\vskip 5pt
\noindent
(ii) Name: ${\bf veq}(a,b)$

\hskip 7pt
{Require:} $a=(a_1\ccd a_D),b=(b_1\ccd b_D)$:  
arrays of elements of $\Q$. 

\hskip 7pt
{Description:} It returns the value $0$ if $a=b$ and $1$ otherwise.

\vskip 5pt
\noindent
(iii) {Name:} ${\bf allpositive}(x)$ 

\hskip 9pt
{Require:} $x=(x_1\ccd x_R)$: an array of elements of $\Q$. 

\hskip 9pt
{Description:} It returns the value $1$ if $x_i>0$ for all $i$ 
and $0$ otherwise. 
\hfill $\diamond$
\end{algo}

We denote the zero  vector in $\gt^*$ by $Z$ as follows:
\begin{equation}
\label{eq:zero-vector}
Z=(\overbrace{0\ccd 0}^D) \in \gt^*.
\end{equation}
We shall use this vector in Step 3.

We now explain non-trivial functions 
needed in the main algorithm of Step 2. 
Let $r$ be the rank of the group $G$ 
and we fix $R=2\ccd r$. Let $A_R$ be the set 
defined in Section \ref{sec:outline-program}. 
We assume that we found a set of representatives $C'_R$ 
of $\weyl \backslash A_R$ in Step 1. 
Let $Q=\# C'_R$.  For example, it turns out 
that $Q=7891$ if $R=7$ for the \pv{} (3). 
We assume that $C'_R$ is documented in a file.
For $i=1\ccd Q$, let $v_i\in\gt^*$ be the 
$i$-th element of $C'_R$. Note that $v_i=(v_{i,j})$ 
is an array of $R$ distinct integers from $\{1\ccd N\}$ 
($N=\dim V$).   

For each $i$, we find a point of of form 
$\be'=c_1 \gam_{v_{i,1}}+\cdots + c_R \gam_{v_{i,R}}$
where $c_1+\cdots+c_R=1$, $c_1\ccd c_R>0$ and 
$\be'$ is orthogonal to vectors 
$\gam_{v_{i,j}}-\gam_{v_{i,1}}$ ($j=2\ccd R$). 
So if we put $m_{1,k}=1$ for $k=1\ccd R$, 
$m_{j,k}=(\gam_{v_{i,j}}-\gam_{v_{i,1}},\gam_{v_{i,k}})_*$ 
for $j=2\ccd R,k=1\ccd R$ and $m=(m_{j,k})$, then $c_1\ccd c_R$ have to satisfy the 
condition:
\begin{equation}
\label{eq:matrix-m}
m \begin{pmatrix}
c_1 \\ \vdots \\ c_R
\end{pmatrix} 
= \begin{pmatrix}
1 \\ \vdots \\ 0
\end{pmatrix}. 
\end{equation}

We augment $m$ by the vector $[1,0\ccd 0]$. 
As we discussed in Section \ref{sec:outline-program}, 
we only have to consider $\be'$ such that 
the above matrix $(m_{j,k})$ is non-singular,  
i.e., the $(R,R)$-entry of the reduced row echelon form 
of this augmented matrix is $1$.  If so,  
the last column is $[c_1\ccd c_R]$ except that 
the positivity of $c_1\ccd c_R$ has to be checked later.

\begin{algo}
{\bf (Functions in the main algorithm of Step 2)}
\label{algo:programs-mforclosest}
\upshape

\vskip 5pt

In the following functions, $R,D$ have to be defined and 
$\gam_1\ccd \gam_N$, 
$v_i=(v_{i,j})$ ($i=1\ccd Q$) from Step 1 have to be read from a file. 

\vskip 5pt
\noindent
(i) {Name:} ${\bf mforclosest}(m,a)$ 

\hskip 7pt
{Require:} (i) $m$: an $R\times (R+1)$ matrix with entries in $\Q$.

\quad\quad
(ii) $a=(a_1\ccd a_R)$: an array of elements of $\bbn$.  

\hskip 7pt
{Description:} 
It makes $m$ the matrix in (\ref{eq:matrix-m}) augmented by the right-hand side if 

\quad\quad
$a$ is substituted by $v_i$.

\hskip 7pt
{Local variables:} $j,k,l\in\bbn$.

\vskip 10pt
\noindent
1. For $j=2\ccd R$ and $k=1\ccd R$, 
$m_{j,k}\leftarrow \sum_{l=1}^D (\gam_{a_j,l}-\gam_{a_1,l})\gam_{a_k,l}$. 

\vskip 5pt
\noindent
2. For $k=1\ccd R+1$, $m_{1,k}\leftarrow 1$. 

\vskip 5pt
\noindent
3. For $j=2\ccd R$, $m_{j,R+1}\leftarrow 0$.

\vskip 5pt
\noindent
(ii) {Name:} ${\bf betacoefficient}(x,a)$

\hskip 9pt
{Require:} (i) $x=(x_1\ccd x_R)$: an array of elements of $\Q$.

\quad\quad
(ii) $a=(a_1\ccd a_R)$: an array of elements of $\bbn$.   

\hskip 9pt
{Description:} If $a$ is substituted by $v_i$ then 
it makes $x$ the unique solution to (\ref{eq:matrix-m}) 

\quad\quad
if the matrix $(m_{j,k})_{1\leq j,k\leq R}$ is non-singular
and the zero vector otherwise. 

\hskip 9pt
{Local variables:}
(i) $i\in\bbn$.  

\quad\quad
(ii) $y=(y_{j,k}),z=(z_{j,k})$:  $R\times (R+1)$ matrices 
with entries in $\Q$.

\vskip 10pt
\noindent
1. ${\bf mforclosest}(y,a)$.

\vskip 5pt
\noindent
2. $z\leftarrow$ the reduced row echelon form of $y$. 

\vskip 5pt
\noindent
3. If $z_{R,R}=1$ then for $i=1\ccd R$, $x_i\leftarrow z_{i,R+1}$. 
Otherwise for $i=1\ccd R$, $x_i\leftarrow 0$.  

\vskip 5pt
This is the end of the functions.
\hfill $\diamond$ 
\end{algo}

With these preparations, we can now describe the
main algorithm of Step 2. 
We consider the \pv{} (3) and the case $R=7$. 
We use the following constants

\vskip 10pt

\begin{center}
 
\begin{tabular}{|c|c|c|c|c|c|c|c|c|}
\hline
$N$ & $R$ & $D$ & $Q$ \\
\hline
40 & 7 & 9 & 7891  \\
\hline
\end{tabular}

\end{center}
 
\vskip 10pt
For other cases, these constants have to be changed appropriately. 

It will be convenient to use a computer language which 
has a linear algebra package with capability of
computing the reduced row echelon from 
(rref) of a matrix. 

\begin{algo}
\label{algo:programs-main-step2}
\upshape
{\bf (The main algorithm of Step 2)}

{Description:} 
We assume that elements of $C'_R$ are read from a file as 
$v_i=(v_{i,j})$ ($i=1\ccd Q$).  
For all $v_i$ ($i=1\ccd Q$), this algorithm finds the closest point $\be'$ 
of the convex hull of $\{\gam_{v_{i,1}}\ccd \gam_{v_{i,R}}\}$ to the 
origin if the convex hull is an $(R-1)$-dimensional simplex
and assigns the zero vector otherwise. 
We assume that constants $N,R,D,Q$ are defined as above and 
the function {\bf sorder} in Algorithm  
\ref{algo:programs-basic-easy} and functions in  
Algorithm \ref{algo:programs-mforclosest} 
are defined before this algorithm. 

{Local  variables:} 
(i) $i,j,k,l,m\in\bbn$.

\quad
(ii) $a=(a_1\ccd a_R)$: an array of elements of $\bbn$.  

\quad 
(iii) $x=(x_1\ccd x_R)$: an array of elements of $\Q$. 

\quad
(iv) $y=(y_1\ccd y_D),z=(z_1\ccd z_D)$: arrays of elements of $\Q$.

\quad
(v) $B^{(1)}=(B^{(1)}_{i,j}),C=(C_{i,j})$: 
$Q\times (D+2R)$ matrices with entries in $\Q$.

\vskip 10pt
\noindent
1. Initialize $m\leftarrow 0$. 

\vskip 5pt
\noindent
2. For $i=1\ccd Q$, do the following. 

\vskip 5pt
2-a. $a\leftarrow v_i$.

\vskip 5pt
2-b. ${\bf betacoefficient}(x,a)$. 

\vskip 5pt
2-c. If ${\bf allpositive}(x)=1$ then do the following.

\vskip 5pt
\quad\quad
2-c-1. $m\leftarrow m+1$. 

\vskip 5pt
\quad\quad
2-c-2. For $j=1\ccd R$, $C_{m,D+j}\leftarrow x_j$, $C_{m,D+R+j}\leftarrow a_j$. 

\vskip 5pt
\quad\quad
2-c-3. For $j=1\ccd D$, 
$C_{m,j} \leftarrow \sum_{l=1}^R C_{m,D+l} \gam_{C_{m,D+R+l},j}$.

%\vskip 5pt
%2-f. $i\leftarrow i+1$ and go back to 2-a. 

\vskip 5pt
In this step, we find $\be'=\sum_{k=1}^R x_k\gam_{v_{i,k}}$ where 
$x_1+\cdots+x_R=1$ and $\be'$ is orthogonal to
$\gam_{v_{i,k}}-\gam_{v_{i,1}}$ for $k=2\ccd R$ if 
such $\be'$ is unique.  If moreover $x_1\ccd x_R>0$ then 
we record entries of $\be'$ as $C_{m,1}\ccd C_{m,D}$, 
$x_1\ccd x_R$ as $C_{m,D+1}\ccd C_{m,D+R}$ 
and  $v_{i,1}\ccd v_{i,R}$ as 
$C_{m,D+R+1}\ccd C_{m,D+2R}$. 
The variable $m$ counts the number of vectors $\be'$ 
which are recorded. We record the last $2R$-entries 
of $C_{m,*}$ so that we know from which 
coordinates $\be'$ is made.   

We now move $\be'$ to an element $\be$ of $\gt^*_+$ 
as follows. It turns out that $m=343$ at this point. 

\vskip 5pt
\noindent
3. For $i=1\ccd m$ do the following. 

\vskip 5pt
3-a. For $j=1\ccd D$, $y_j\leftarrow C_{i,j}$. 

\vskip 5pt
3-b. $z\leftarrow {\bf sorder}(y)$. 

\vskip 5pt
3-c. For $j=1\ccd D$, $B^{(1)}_{i,j}\leftarrow z_j$. 

\vskip 5pt
3-d. For $j=1\ccd 2R$, $B^{(1)}_{i,D+j}\leftarrow C_{i,D+j}$. 

\vskip 5pt
\noindent
4 Record the first $m$ rows of $B^{(1)}$ in a file. 

\vskip 5pt
This finishes the algorithm for given $R$. 
\hfill $\diamond$
\end{algo}

When we consider the \pv{} (3), we consider $R=7\ccd 2$.
If we combine informations for all $R$, it is convenient
to change the step 1 so that 
the data for $B^{(1)}$ are without interruption. 
For example, 
when we consider the case $R=6$, we change 
the step 1 to $m\leftarrow 343$ since the number of $\be$
for $R=7$ is $343$. We add the obvious choice
for the case $R=1$ in the end. 
Even though the sizes of the columns are different
for different $R$'s, we only use the first $D$-columns
from now on and so it will not cause any problem.

We remove duplication from $B^{(1)}$ in Step 3.

\subsection{Step 3}

We assume that vectors $\be\in\gt^*_+$ from Step 2 
are read from a file as $B^{(1)}=(B^{(1)}_{i,j})$. 
As we pointed out above, we only use the first $D$-columns. 
Note that $[B^{(1)}_{i,1}\ccd B^{(1)}_{i,D}]$ is the 
$i$-th $\be\in \gt^*_+$. 
We also assume that the weight vectors $\gam_1\ccd \gam_N$
and functions in Step 2 are read from a file. 
Note that $Z=[0\ccd 0]$ is the zero vector in $\gt^*$. 

Let $Q$ be the number of $\be$ (in other words $i$'s in $B^{(1)}$). 
In the actual program, we removed the duplication in Step 2 
and moved to Step 3 and so we do not know this value $Q$. 
However, including the process of removing duplication twice
probably makes the reader slightly confusing. 
So we explain the process of removing the duplication only in
Step 3. We use the constants $N=\dim V,D,Q$. 
For example, $N=40,D=9$ for the \pv{} (3).

\vskip 10pt

\begin{algo}
\label{algo:programs-main-step3}
\upshape
{\bf (The main algorithm of Step 3)}
{Description:} 
This algorithm removes duplication and the zero vector 
from the list of $\be$. Then it finds coordinate vectors 
contained in the subspaces $Z_{\be},W_{\be}$. 
Constants $N,D,Q$ have to be defined as above.  
The function {\bf veq} in Algorithm \ref{algo:programs-basic-easy}
and the zero vector $Z$ (see (\ref{eq:zero-vector}))
have to be defined.

{Local variables:}
(i) $i,j,k,l,m,p\in\bbn$. 

\quad
(ii) $y=(y_1\ccd y_D),z=(z_1\ccd z_D)$: arrays of elements of $\Q$.   

\quad
(iii) $B^{(2)}=(B^{(2)}_{i,j})$: a $Q\times D$ matrix with entries in $\Q$. 

\quad
(iv) $E=(E_{i,j})$: a $Q\times N$ matrix with entries in $\bbn$. 

\vskip 10pt
\noindent
1. Initialize $m\leftarrow 0$. 

\vskip 5pt
\noindent
2. For $i=1\ccd Q$, do the following. 

\vskip 5pt
2-a. $l\leftarrow 1$. 

\vskip 5pt
2-b. For $k=1\ccd D$, $y_k\leftarrow B^{(1)}_{i,k}$. 

\vskip 5pt
2-c. For $j=1\ccd i-1$, do the following. 

\vskip 5pt
\quad\quad
2-c-1. For $k=1\ccd D$, 
$z_k\leftarrow B^{(1)}_{j,k}$.

\vskip 5pt
\quad\quad
2-c-2. $l\leftarrow {\bf veq}(y,z) l$. 

%\vskip 5pt
%\quad\quad
%2-c-3. $j\leftarrow j+1$ and go back to 2-b-1. 

\vskip 5pt
2-d. $l\leftarrow {\bf veq}(y,Z) l$. 

\vskip 5pt
2-e. If $l=1$ then $m\leftarrow m+1$ and 
for $k=1\ccd D$, $B^{(2)}_{m,k}\leftarrow B^{(1)}_{i,k}$. 

%\vskip 5pt
%2-g. $i\leftarrow i+1$ and go back to 2-a. 

\vskip 5pt

The steps 2-a\ccd 2-e remove the duplication 
and the zero vector.  We now determine coordinate 
vectors contained in $Z_{\be},W_{\be}$. 

\vskip 5pt
\noindent
3. For $i=1\ccd m$, do the following.

\vskip 5pt
3-a. For $j=1\ccd N$, do the following. 

\vskip 5pt
\quad\quad
3-a-1. $k\leftarrow \sum_{p=1}^D B^{(2)}_{i,p} \gam_{j,p}$.

\vskip 5pt
\quad\quad
3-a-2. $l\leftarrow \sum_{p=1}^D (B^{(2)}_{i,p})^2$.

\vskip 5pt
\quad\quad
3-a-3. If $k>l$ then $E_{i,j}\leftarrow 2$.

\vskip 5pt
\quad\quad
3-a-4. If $k=l$ then $E_{i,j}\leftarrow 1$.

\vskip 5pt
\quad\quad
3-a-5. If $k<l$ then $E_{i,j}\leftarrow 0$.

\vskip 5pt
\noindent
4. Record the first $m$ rows of $B^{(2)}$ and $E$. 

\vskip 5pt
This finishes the algorithm. 
\hfill $\diamond$
\end{algo}

It turns out that after the step 2 above,  
$m=49,81,292,183$ for the \pv s (1)--(4) 
respectively. $E$ is a matrix which determines the subspaces 
$Z_{\be},W_{\be}$. For $\beta\in\gB$, the $j$-th coordinate 
vector $\mathbbm a_j$ belongs to $Z_{\be}$ (resp. $W_{\be}$)
if and only if 
\begin{equation*}
(\be,\gam_j)_*=(\be,\be)_* \quad   
(\text{resp. } (\be,\gam_j)_* > (\be,\be)_*). 
\end{equation*}
Note that $Z_{\be},W_{\be}$ are spanned by coordinate vectors 
contained in them. We substituted $2,1$ to $E_{i,j}$ 
according as $\mathbbm a_j\in W_{\be},Z_{\be}$ 
and $0$ otherwise in the step 3.

Algorithms \ref{algo:programs-main-step2}, \ref{algo:programs-main-step3}
are the same for the \pv s (1)--(4). One just has to 
include correct informations such as $\gam_1\ccd \gam_N$, $N,R,D$, etc.

\section{Output for the case (1)}
\label{sec:output1}

In this section we list the output of our programming
for the case (1).  We made the program so that the output 
will be a tex file. 
Let $\gt^*_+$ be the Weyl chamber and 
$\mathbbm a_1\ccd \mathbbm a_{18}$ the coordinate 
vectors both defined in Section \ref{sec:outline-program}. 
The set $\gB$ consists of $\be_i$'s in the following table.  
Note that $\be_i\in\gt^*_+$ for all $i$. 

\vskip10pt

\begin{center}

% [inline block 0: 31 envs, 142808 chars -> data_tex | \begin{tabular}{|l|l|l|} \hline...]


\end{center}

\bibliographystyle{plain} 
\bibliography{ref4}

\begin{thebibliography}{10}

\bibitem{borelb}
A.~Borel.
\newblock {\em Linear algebraic groups}.
\newblock Springer-{V}erlag, Berlin, Heidelberg, New York, 2nd edition, 1991.

\bibitem{gurevich}
{G.B.} Gurevich.
\newblock {\em Theory of algebraic invariants}.
\newblock The Netherlands, 1964.

\bibitem{ishimoto}
K.~Ishimoto.
\newblock Orbital dexponential sums for some quadratic prehomogeneous vector
  spaces.
\newblock preprint.

\bibitem{kempf}
G.~Kempf.
\newblock Instability in invariant theory.
\newblock {\em Ann. of Math.}, 108:299--316, 1978.

\bibitem{kene}
G.~Kempf and L.~Ness.
\newblock The length of vectors in representation spaces.
\newblock In {\em Algebraic Geometry, Proceedings, Copenhagen}, volume 732 of
  {\em {Lecture Notes in Mathematics}}, pages 233--242. Springer-Verlag,
  Berlin, Heidelberg, New York, 1978.

\bibitem{kimura-orbits}
T.~Kimura and S.~Kasai.
\newblock The orbital decomposition of some prehomogeneous vector spaces.
\newblock In {\em Algebraic groups and related topics ({K}yoto/{N}agoya,
  1983)}, volume~6 of {\em Adv. Stud. Pure Math.}, pages 437--480.
  North-Holland, Amsterdam, 1985.

\bibitem{kimu}
T.~Kimura and M.~Muro.
\newblock On some series of regular irreducible prehomogeneous vector spaces.
\newblock {\em Proc. Japan Acad.}, Math. Sci. 55 Ser. A:384--389, 1979.

\bibitem{kirwan}
F.C. Kirwan.
\newblock {\em Cohomology of quotients in symplectic and algebraic geometry}.
\newblock Mathematical Notes. Princeton University Press, 1984.

\bibitem{mufoki}
D.~Mumford, J.~Fogarty, and F.~Kirwan.
\newblock {\em Geometric invariant theory}.
\newblock Springer-{V}erlag, Berlin, Heidelberg, New York, 3rd edition, 1994.

\bibitem{ness}
L.~Ness.
\newblock A stratification of the null cone via the moment map.
\newblock {\em Amer. J. Math.}, 106:1281--1329, 1984.

\bibitem{ozekib}
I.~Ozeki.
\newblock On the micro-local structure of a regular prehomogeneous vector
  spaces associated with {$\gl(8)$}.
\newblock {\em Proc. Japan Acad.}, Math. Sci. 56 Ser. A:18--21, 1980.

\bibitem{ozekic}
I.~Ozeki.
\newblock On the micro-local structure of the regular prehomogeneous vector
  spaces associated with {$\text{SL}(5) \times \gl(4)$} {I}.
\newblock {\em Publ. Res. Inst. Math. Sci.}, 26:no3, 539--584, 1990.

\bibitem{Springer-LAG}
T.A. Springer.
\newblock {\em Linear algebraic groups}, volume~9 of {\em Progress in
  Mathematics}.
\newblock Birkh\"auser Boston, Inc., Boston, MA, second edition, 1998.

\bibitem{tajima-yukie-GIT3}
K.~Tajima and A.~Yukie.
\newblock On the {GIT} stratification of prehomogeneous vector spaces {III}.
\newblock preprint.

\bibitem{tajima-yukie-GIT4}
K.~Tajima and A.~Yukie.
\newblock On the {GIT} stratification of prehomogeneous vector spaces {IV}.
\newblock in preparation.

\bibitem{tajima-yukie}
K.~Tajima and A.~Yukie.
\newblock Stratification of the null cone in the non-split case.
\newblock {\em Comment. Math. Univ. St. Pauli}, 63(1-2):261--276, 2014.

\bibitem{tajima-yukie-GIT2}
K.~Tajima and A.~Yukie.
\newblock On the {GIT} stratification of prehomogeneous vector spaces {II}.
\newblock {\em Tsukuba J. Math.}, 44(1):1--62, 2020.

\bibitem{wryu}
D.J. Wright and A.~Yukie.
\newblock Prehomogeneous vector spaces and field extensions.
\newblock {\em Invent. Math.}, 110(2):283--314, 1992.

\bibitem{yukiej}
A.~Yukie.
\newblock Prehomogeneous vector spaces and ergodic theory {I}.
\newblock {\em Duke Math. J.}, 90(1):123--148, 1997.

\end{thebibliography}

\end{document}